\documentclass[runningheads]{llncs}

\usepackage[T1]{fontenc}

\usepackage{amsmath}
\usepackage{amssymb}
\usepackage{verbatim}
\usepackage{xcolor}

\usepackage{graphicx}

\usepackage{stmaryrd}

\usepackage{mathrsfs} 

\usepackage{txfonts}  
  \DeclareSymbolFont{symbolsC}{U}{txsyc}{m}{n}
  \DeclareMathSymbol{\strictif}{\mathrel}{symbolsC}{74}
  \DeclareMathSymbol{\boxright}{\mathrel}{symbolsC}{128}

\usepackage{soul}

\usepackage{enumerate}

\usepackage{makecell}
\usepackage{rotating}

\newtheorem{df}{Definition}

\newtheorem{thm}[df]{Theorem}
\newtheorem{lem}[df]{Lemma}

\newtheorem{cor}[df]{Corollary}

\newtheorem{prop}[df]{Proposition}
\newtheorem{exm}[df]{Example}

\newtheorem{rmk}[df]{Remark}
\newtheorem{fact}[df]{Fact}

\newcommand{\lorbar}
{
  \mathbin{%
    \vcenter{\offinterlineskip
      \hbox to 6pt{\hss\rule{0.5em}{0.5pt}\hss}
      \kern0.2ex
       \hbox{$\vee$}
    }%
  }%
}

\newcommand{\upto}{%
  \mathrel{%
    \vcenter{\offinterlineskip
      \hbox{%
        \rule[0.35ex]{0.25em}{0.5pt}
        \kern-0.05em
        \rule[0.35ex]{0.25em}{0.5pt}
        \kern-0.1em
        \scalebox{0.6}[0.75]{$\prec$}
      }%
    }%
  }%
}

\newcommand{\ecf}{\rightarrowtail}
\newcommand{\necf}{\stackrel{.}{\rightarrowtail}}
\newcommand{\rcond}{\leadsto}

\newcommand{\maxcond}{\hookrightarrow}
\newcommand{\mincond}{\mathbin{\text{\rotatebox[origin=c]{180}{$\hookleftarrow$}}}}

\usepackage{trimclip}

\def\wcond{\,\hbox{$\circ$}\kern-1.5pt\hbox{\clipbox{2pt 0 0 0}{$\rightarrow$}}\,}

\def\bcond{\,\hbox{$\bullet$}\kern-1.5pt\hbox{\clipbox{2pt 0 0 0}{$\rightarrow$}}\,}

\newcommand{\nwcond}{{\dot\wcond}}

\newcommand{\PL}{\operatorname{PL}}
\newcommand{\La}{\mathcal{L}}

\newcommand{\NE}{\operatorname{NE}}

\newcommand{\SET}[1]{\mathbf{#1}}

\newcommand{\tand}{%
  \mathbin{%
    \vcenter{\offinterlineskip
      \hbox{$\wedge$}
      \kern0.2ex
      \hbox to 6pt{\hss\rule{0.5em}{0.5pt}\hss}
    }%
  }%
}

\newcommand{\wmimp}{%
  \mathbin{%
    \vcenter{\hbox{%
      \ooalign{%
        $\rightarrowtriangle$\cr
        \hspace{0.62em}\raisebox{0.02ex}{$\cdot$}\hfill\cr
        }
    }}%
  \!}%
}

\usepackage{pict2e,xfp}
\makeatletter
\newcommand{\vvee}{\mathbin{\mathpalette\d@vee\relax}}
\newcommand{\d@vee}[2]{%
  \begingroup
  \sbox\z@{$\m@th#1\vee$}%
  \setlength{\unitlength}{\ht\z@}%
  \kern0.1\wd\z@
  \begin{picture}(\fpeval{0.4\wd0/\unitlength},1)
  \roundcap\d@vee@thickness{#1}
  \Line(0,1)(\fpeval{0.4\wd0/\unitlength},0)
  \end{picture}%
  \kern-0.3\wd\z@\box\z@
  \endgroup
}
\newcommand{\d@vee@thickness}[1]{
  \linethickness{%
    1\fontdimen8
      \ifx#1\displaystyle\textfont\else
      \ifx#1\textstyle\textfont\else
      \ifx#1\scriptstyle\scriptfont\else
      \scriptscriptfont\fi\fi\fi 3
  }%
}
\makeatother

\newcommand{\ror}{
\mathbin{%
    \ooalign{%
      $\vee$\cr
\hidewidth\smash{\lower-0.3ex\hbox{$\cdot$}}\hidewidth\cr
    }%
  }%
  }

\newcommand{\vveedot}
 { \mathbin{%
    \vcenter{\offinterlineskip
      \hbox to 8.7pt{\hss\rule{0.5em}{0.5pt}\hss}      
      \kern0.2ex
      \hbox{$\vvee$}
    }%
  }%
}

\newcommand{\vveedots}{\mathbin{\mathpalette\vveedotaux\relax}}
\newcommand{\vveedotaux}[2]{%
  \vcenter{\offinterlineskip
    \hbox to 0.6em{\hss\vrule width 0.3em height 0.4pt\hss}%
    \kern0.15ex
    \hbox{$#1\vvee$}%
  }%
}

\newcommand{\arrowvec}[1]{\mathaccent"017E{#1}}

\newcommand{\commf}[1]{}
\newcommand{\commy}[1]{}

\newcommand{\todelete}[1]{
}
\newcommand{\tosimplydelete}[1]{
}

\newcommand{\corrf}[1]{#1}
\newcommand{\corry}[1]{#1}

\title{Possible and Impossible Conditionals for 
Team Logics
}

\author{Fausto Barbero\inst{1}\orcidID{0000-0002-0959-6977} \and Fan Yang\inst{2}\orcidID{0000-0003-0392-6522}}

\institute{Department of Philosophy, History and Art Studies, University of Helsinki \\ \email{fausto.barbero@helsinki.fi}  \and Department of Philosophy and Religious Studies, Utrecht University
}

\begin{document}

\maketitle

\begin{abstract}
We study whether a logic based on team semantics can be enriched with a conditional satisfying minimal requirements, 
\corrf{such as} preservation of the closure property of the logic, Modus Ponens, and the Deduction Theorem.  We show that such well-behaved conditionals exist for downward or upward closed logics, but do not typically exist for union closed, convex or intersection closed logics. We also briefly investigate conditionals  satisfying weaker requirements.


   
    \keywords{Team semantics \and Conditionals \and Deduction theorem \and Modus Ponens 
    }
\end{abstract}



In this paper, we study the possibility of having well-behaved conditionals in logics based on teams semantics. 
Team semantics is a generalization of  Tarskian semantics in which formulas are evaluated on \emph{teams}, which are \emph{sets} of evaluation points (such as  propositional valuations), rather than on single evaluation points. It was introduced by Hodges \cite{Hod1997} to characterise dependencies between first-order variables, in the context of independence-friendly logic (\cite{HinSan1989}, see also \cite{ManSanSev2011}). The framework has been developed further by V\"{a}\"{a}n\"{a}nen in dependence logic \cite{Vaa2007}, and inquisitive logic \cite{CiaRoe2011} \corrf{essentially uses the same semantics.} 
Many more team-based logics have then been introduced, including propositional \cite{YanVaa2016} and modal \cite{Vaa2008} dependence logic. These logics are extensions of classical logic, and thus inherit the classical implication $\alpha\to\beta:=\neg\alpha\vee\beta$ over classical formulas. The earliest versions of these logics (such as independence-friendly logic and dependence logic), however, do not have a conditional in their syntax for arbitrary formulas. In \cite{AbrVaa2009}, an adequate conditional $\rightarrow$ (known as \emph{intuitionistic implication}) was introduced for dependence logic, a language which has the 
{\em downward closure property}\commf{R1: Explain what this means}. 
The conditional was also used in inquisitive logic. While adding $\to$ to first-order dependence logic increases its expressive power significantly 
\cite{Yang2013}, the conditional is generally well behaved in downward closed logics, in the sense that it preserves downward closure, and satisfies both Modus Ponens and the Deduction Theorem. 


In recent years, many variants of dependence logic with different closure properties 
have received increasing attention, particularly  \emph{union closed} and \emph{convex} logics. 
None of the union closed languages in the literature has  a conditional (applied to arbitrary formulas) in their syntax, and 
the intuitionistic implication $\to$ clearly does not preserve union closure. While some of the convex logics do have 
$\to$ in their language (as $\to$ preserves convexity), 
the Deduction Theorem clearly fails in this context. 

In this paper, we raise the question whether a team logic with a certain closure property can have a conditional $>$ that is well-behaved in the sense that (1) $>$ preserves the closure property, (2) Modus Ponens and (3) Deduction Theorem hold for $>$ over the logic; namely, for 
any set $\{\varphi,\psi\}\cup \Gamma$ of formulas in the logic, \corrf{the following hold:} 
\begin{description}
\item[(Preservation)] If $\varphi$ and $\psi$ have the closure property, so does $\varphi>\psi$.
    \item[(Modus Ponens)]  $\varphi,\varphi>\psi \models \psi$.
    \item[(Deduction Theorem)] If $\Gamma,\varphi\models \psi$, then $\Gamma \models \varphi > \psi$.
\end{description}
Modus Ponens and the Deduction Theorem are of clear importance to ensure that a logic admits good axiomatic systems, so much so that e.g. the logicality of quantum logic was \corrf{at times} put in doubt 
(\corrf{see e.g.} \cite{JauPir1970}) 
\corrf{by noting} its lack of a conditional satisfying these constraints. In team semantics, we find it natural to add the requirement of Preservation, since the loss of a closure property has heavy repercussions\commf{R1: explain which repercussions (with refs?) [I have added a footnote]} on the validity of inference rules. 
We here address our question in the context of propositional languages, but many of our insights apply as well to first-order or modal team logics. 

The material implication $\rightarrowtriangle$ in team semantics satisfies both Modus Ponens and the Deduction Theorem, and, as we shall prove, it is the unique such conditional for \corrf{languages that meet certain expressivity requirements.} 
\corrf{Then, the fact that} 
$\rightarrowtriangle$  does not preserve e.g. union closure or convexity \corrf{can be used to show that} 
well-behaved conditionals do not exist for expressively complete languages with these closure properties. This impossibility result naturally leads us to consider weaker constraints, such as:
\begin{description}
    \item[(Feeble Modus Ponens)]  $\alpha,\alpha>\psi \models \psi$ for flat formulas $\alpha$.
    \item[(Feeble Deduction Theorem)] If $\Delta,\varphi\models \psi$, then $\Delta \models \varphi > \psi$ for sets $\Delta$ of flat formulas.
\end{description}
\corrf{We shall see that, in union closed and convex languages, impossibility results are obtained also when assuming some of the weaker constraints.}



In Section \ref{sec: conditional for upwards closed logic}, we prove that in downward closed languages, the intuitionistic implication $\to$ is the unique well-behaved conditional that satisfies all the requirements, and we also identify a unique well-behaved conditional for upward closed languages.




We open Section \ref{sec: impossibility} by showing a similar characterization of the uniquenss of material implication $\rightarrowtriangle$ and its weaker variant, from which our impossibility results for several non-downward closed, expressively complete languages follow. We then proceed to show that this result can be sharpened to cover \corrf{less} expressive union closed languages \corrf{and by weakening the requirement of the Deduction Theorem to its Feeble variant.} 
This argument is \corry{essentially}\commy{I strengthened this: this is essentially Hardegree's proof, as referee also pointed out.} 
Hardegree's proof of the impossibility of  well-behaved conditionals for quantum logic \cite{Har1974} \corry{recast in our setting}. The main idea  is to construct formulas for which the distributivity law $\varphi\wedge (\psi\vee\chi)\models(\varphi\wedge \psi)\vee (\varphi\wedge\chi)$ fails, and to derive a contradiction \corry{from the existence of such a well-behaved conditional.}\commy{shortened the sentence to fit it in one line}

In Section \ref{sec: impossibility in convex logics}
, we adapt the argument to prove that for convex logics and intersection closed logics 
well-behaved conditional cannot in general exist either; \corrf{in the convex case, it suffices to assume\todelete{ either }Modus Ponens\todelete{ or the Deduction Theorem }to be in\todelete{ their}\corry{its} Feeble variant.} 
Our argument for convex logics uses the failure of a different type of distributivity law $\varphi\wedge( \psi\vveedot\chi) 
       \models (\varphi \land \psi) \vveedot (\varphi \land \chi)$, or in fact, the failure of a strengthened version of it, where $\vveedot$ is a variant of the global (or inquisitive) disjunction $\vvee$ as introduced in \cite{AntKnu2025}. For intersection closed logics, which currently lack a (complete) syntax in the literature, we work on an abstract level, and use the failure of the distributivity of conjunction over tensor conjunction $\tand$ from \cite{HelLuoVaa2024}. 

In Section 5, we consider some weak conditionals that can still be obtained, along the lines, e.g.,  suggested by van Fraassen \cite{Fra1973} in the context of quantum logic. \corrf{We use some of these conditionals to show} \corry{that the impossibility results are, in a sense, optimal.}

We end the paper in Section 6 with some  concluding remarks.

\section{Classical and Non-classical Languages}\label{sec:pre}


Let us start by defining the classical language in the team semantics setting. Let $Prop$ be a set of propositional variables; for simplicity, we assume that  $Prop$ is finite\footnote{\label{footnote: locality}Propositional team logics are more commonly defined using a countably infinite set of propositional variables. Since all of the languages considered in the paper enjoy the \emph{locality} property (i.e., whether a formula is satisfied only depends on the variables that actually occur in the formula), this restriction is not substantial.
}. 
The language $L_c$ of classical propositional logic is defined by the grammar 
\[\alpha::= p\in Prop\mid \bot\mid \neg \alpha\mid \alpha\wedge\alpha\mid \alpha\vee\alpha\mid \alpha\to\alpha\]
We shall use the first Greek letters $\alpha,\beta,\dots$ 
to denote classical formulas. Generic formulas in the extensions of $L_c$ to be discussed 
are denoted by $\varphi,\psi,\chi,\theta$, etc.




A \emph{valuation} 
(over $Prop$) is a function $v:Prop \rightarrow \{0,1\}$. A  \emph{team} $T$ is a set of valuations. The team semantics for 
$L_c$ is defined inductively as:
\begin{itemize}
\item $T\models p$ ~~iff~~ for all $v\in T$, $v(p)=1$.
\item $T\models \bot$ ~~iff~~ $T=\emptyset$.
    \item $T\models \neg\alpha$ ~~iff~~ for all $v\in T$, $\{v\}\not\models \alpha$.

    \item $T\models \alpha\land\beta$ ~~iff~~ $T\models \alpha$ and $T\models \beta$.

    \item $T\models \alpha \lor \beta$ ~~iff~~ there are $T_1,T_2$ with $T = T_1 \cup T_2$, $T_1 \models \alpha$ and $T_2\models \beta$.

    \item $T\models \alpha \rightarrow \beta$ ~~iff~~ for all $S\subseteq T$,  $S\models \alpha$ implies $S\models \beta$. 
\end{itemize}
We assert that $\Gamma\models \varphi$, if for all teams $T$, 
$T\models \Gamma$ implies $T\models\varphi$.\commy{I shortened this-just to save space, as we are desperate for space}

It can be easily verified that classical formulas have the empty team property, are downward and union closed, convex, and flat; these concepts are defined as:
\begin{itemize}
    \item $\varphi$ has the \emph{empty team property} or is \emph{empty team closed} if $\emptyset\models \varphi$.

    \item $\varphi$ is \emph{downward closed} if, whenever $T\models \varphi$ and $S\subseteq T$, we have $S\models \varphi$ 
    \item $\varphi$ is \emph{convex} if, whenever $S,T\models \varphi$ and $S \subseteq R\subseteq T$, we have $R\models \varphi$.

    \item $\varphi$ is \emph{union closed} if, whenever $T\models \varphi$ and $S\models \varphi$, we have $S\cup T\models \varphi$ 

    \item $\varphi$ is \emph{flat} \corrf{in case} $T\models \varphi$ iff for all $v\in T$, $\{v\}\models \varphi$.


\end{itemize}
Clearly, downward closure implies convexity; moreover, a formula is flat if and only if it is union closed, downward closed and  empty team closed.

By using flatness, one can easily show that 
all classical tautologies are valid in team semantics as well. In particular, we have that $\neg\alpha\equiv \alpha\to \bot$ and 
$\alpha\to\beta\equiv \neg\alpha\vee\beta$. This conditional $\to$ satisfies both Modus Ponens and the Deduction Theorem  over the classical language. Moreover, 
$ \alpha\land (\beta\lor\gamma) \models (\alpha \land \beta) \lor (\alpha \land \gamma)$ (distributivity) holds.

        


We also consider closure properties on their own, independently of any specific syntax, in the following sense: A {\em team proposition} $\mathbb{P}$ (over $Prop$) is a set of teams (over $Prop$). We say that $\mathbb{P}$ is {\em empty team closed} if $\emptyset\in \mathbb{P}$;  $\mathbb{P}$ is downward closed if, $T\in \mathbb{P}$ and $S\subseteq T$ imply $S\in \mathbb{P}$; and similarly for other closure properties. A classical formula $\alpha$ defines a team proposition 
\[\llbracket\alpha\rrbracket:=\{T\subseteq \{0,1\}^{Prop}: T\models\alpha\},\]\commy{definitely desperate, removed the centering} 
and it 
is clearly empty team closed, downward closed, union closed,  convex and flat. 

The classical language $L_c$ is {\em expressively complete} for flat team propositions, in the sense that
\begin{enumerate}[(1)]
    \item for every formula $\alpha\in L_c$, the team proposition $\llbracket \alpha\rrbracket$ defined by $\alpha$ is flat, and 
    \item every flat team proposition $\mathbb{P}$ can be defined by a formula $\alpha\in L_c$, i.e., $\mathbb{P}=\llbracket \alpha\rrbracket$.
\end{enumerate}
To see why item (2) above holds, observe first that for a flat team proposition $\mathbb{P}$, we have $\mathbb{P}=\mathcal P(\bigcup \mathbb{P})$. 
Put $T=\bigcup \mathbb{P}$. Let $Prop=\{p_1,\dots, p_n\}$, and let
\begin{equation}\label{cl-exp-compl-for}
    \alpha_T=\bigvee_{v\in T}(p_1^{v(p_1)}\wedge\dots\wedge p_n^{v(p_n)}),
\end{equation}
where $p_i^1:=p_i$ and $p_i^0=\neg p_i$. It can be easily verified that $S\models \alpha_T$ iff $S\subseteq T$ iff $S\in \mathcal{P}(T)=\mathbb{P}$. Thus, $\mathbb{P} = \llbracket \alpha_T \rrbracket$.


In this paper, we also consider the dual properties of downward  and union closure, namely upward closure and intersection closure: a team proposition $\mathbb{P}$ is said to be 
\begin{itemize}
    \item \emph{ upward closed} if $T\in \mathbb{P}$ and $T\subseteq S$ imply $S\in \mathbb{P}$.
        \item  \emph{ closed under intersection} if $S,T\in \mathbb{P}$ implies $S\cap T\in \mathbb{P}$.
\end{itemize}
These two closure properties arise naturally in our context, although they are less well studied in the team semantics literature. 
Upward closure clearly implies union closure, and downward closure implies intersection closure. The classical language is therefore closed under intersection, but it is clearly not upward closed (as, e.g., any propositional variable $p$ is not upward closed). For (non-trivial) examples of the two new closure properties,  consider two arbitrary distinct non-empty 
teams $T,S\subseteq \{0,1\}^{Prop}$ such that $S\not\subseteq T$, $T\not \subseteq S$ \corrf{and $S\cap T\neq \emptyset$}.\commf{Without this last requirement, the statement ``nor downward closed'' in the second example can fail!} The team proposition 
\[\mathbb{P}_1=\{R\subseteq \{0,1\}^{Prop} :R\supseteq T\text{ or }R\supseteq S\}\]
is upward closed (but not intersection closed), and $\mathbb{P}_2=\{T,S,T\cap S \}$ is  intersection closed (but neither upward closed nor downward closed).



 
 In the following, we will consider languages (or ``logics'') $[\circ_1,\dots,\circ_n]$ generated by connectives $\circ_1,\dots,\circ_n$, \corry{or extensions $L[\circ_1,\dots,\circ_n]$ of a language $L$ with the listed connectives}. This list may include some connectives of $L_c$ and other ones, such as those we will introduce in later sections.  If the formulas of $[\circ_1,\dots,\circ_n]$ are not all flat, we will say that this language is \emph{non-classical}. 

As with the usual (single-valuation based) classical language, the sets of connectives $\{\neg,\wedge\}$, $\{\neg,\wedge,\vee\}$ and $\{\bot,\wedge,\to\}$ are {\em functionally complete} for the classical language $L_c$, in the sense that all the other connectives can be defined in terms of those from each of these  sets. In particular, the formula (\ref{cl-exp-compl-for}) can also be written in these languages; thus, each of the languages $[\neg,\wedge]$, $[\neg,\wedge,\vee]$ and $[\bot,\wedge,\to]$ is expressively complete for flat properties.  When building a non-classical language, on the other hand, the choice of a classical basis matters: while all the connectives of $L_c$ preserve downward closure, it is not hard to see that union closure is preserved only by $\neg,\wedge,\vee$, convexity only by $\neg,\wedge,\to$, and intersection closure only by $\neg,\land$.

\section{Well-behaved Conditionals for Downward and Upward Closed Logics
}\label{sec: conditional for upwards closed logic}


We first consider downward and upward closed languages $L$, where conditionals that preserve the corresponding closure property and satisfy Modus Ponens and the Deduction Theorem can be found, and they are also unique if $L$ is sufficiently expressive.
\commy{the uniqueness proof uses this: see below}
\commy{Mention this also in the introduction of the paper}



We start with downward closed languages, which are well studied in the literature. One such example is the extension of the classical language $[\neg,\wedge,\vee]$ with the global disjunction $\vvee$, whose semantic is defined as
\begin{itemize}
   \item $T\models \varphi \vvee \psi$ ~~iff~~ $T\models \varphi$ or $T\models \psi$.
\end{itemize}
 More explicitly, formulas of the language $[\neg,\wedge,\vee,\vvee]$ are formed by:
\[
\varphi::= p\mid \neg\varphi
\mid  \varphi \land \varphi \mid \varphi \vee \varphi\mid  \varphi \vvee \varphi
\]
It was shown in \cite{YanVaa2016} that this language is expressively complete for empty team closed and downward closed team propositions (\cite{YanVaa2016} restricts negation to propositional variables only, which is not a substantial restriction for our purpose; cf. \cite{Yang2026}).

In downward closed languages, \corrf{the conditional $\to$ ({\em intuitionistic implication}) already defined in the previous section} is well-known to preserve downward closure, and satisfies Modus Ponens and the Deduction Theorem in downward closed languages. If, in addition, $L$ can express all flat team propositions (i.e., team propositions of the form $\mathcal{P}(T)=\{S:S\subseteq T\}$\commy{Here we would actually need to discuss why all flat team propositions are of this powerset form. The proof Proposition \ref{failure-distr} uses this as well. But maybe we leave this discussion to the journal version - no space here}), $\to$ is the unique such conditional, as shown in the next theorem: 

\begin{thm}\label{int-imp-uniqueness}
Let $L$ be a downward closed language that can express all flat team propositions, and $>$ a binary operator. Then $L[>]$ is downward closed and satisfies both Modus Ponens and the Deduction Theorem for $>$ iff $>$ is equivalent to  $\to$ over $L[>]$, meaning that $\varphi>\psi\equiv\varphi\to\psi$ for all formulas $\varphi,\psi$ in $L[>]$.\commy{I defined this here and this way -not completely sure about the definition: should actually be $\varphi,\psi$ in any language? Or, strictly speaking: $\llbracket \mathcal{T}>\mathcal{S}\rrbracket=\llbracket \mathcal{T}\to\mathcal{S}\rrbracket$ --yeah, but let's not go into that...}\commy{No, the current definition is correct. The more general one is not: the proof uses downward closure. All good}
\end{thm}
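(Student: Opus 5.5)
For the direction from right to left, I will assume $\varphi>\psi\equiv\varphi\to\psi$ for all $\varphi,\psi$ in $L[>]$ and check the three properties directly from the semantic clause of $\to$. For downward closure, suppose $T\models\varphi\to\psi$ and $T'\subseteq T$. Every $S\subseteq T'$ is also a subset of $T$, so $T'\models\varphi\to\psi$. An induction on formulas then shows $L[>]$ is downward closed: $L$ is downward closed, and the new clause for $>$ coincides with $\to$, which preserves downward closure. Modus Ponens is immediate: if $T\models\varphi$ and $T\models\varphi\to\psi$, take $S=T$. For the Deduction Theorem, suppose $\Gamma,\varphi\models\psi$, let $T\models\Gamma$, and let $S\subseteq T$ with $S\models\varphi$. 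Since $L[>]$ is downward closed, $S\models\Gamma$, hence $S\models\psi$, and so $T\models\varphi\to\psi$.

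For the direction from left to right, I will fix $\varphi,\psi\in L[>]$ and prove both entailments. To show $\varphi>\psi\models\varphi\to\psi$, let $T\models\varphi>\psi$ and $S\subseteq T$ with $S\models\varphi$. By downward closure of $L[>]$ we get $S\models\varphi>\psi$, and Modus Ponens gives $S\models\psi$. Hence $T\models\varphi\to\psi$.

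For the converse, I will use the expressivity assumption. Suppose $T\models\varphi\to\psi$, and let $\alpha_T\in L$ define the flat proposition $\mathcal P(T)$. I claim $\alpha_T,\varphi\models\psi$. Indeed, any $S$ satisfying both has $S\subseteq T$ and $S\models\varphi$, so $S\models\psi$ by the clause for $\to$ at $T$. The Deduction Theorem then yields $\alpha_T\models\varphi>\psi$, and since $T\models\alpha_T$ we conclude $T\models\varphi>\psi$. Note that $\alpha_T$ belongs to $L\subseteq L[>]$, so the Deduction Theorem over $L[>]$ applies to it.

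The main obstacle is this last step: it needs $\Gamma$ to be chosen so that it pins down exactly the subteams of $T$. That is precisely what the expressibility of all flat team propositions provides. Everything else is routine unfolding of definitions, with downward closure of $L[>]$ used in the second step.
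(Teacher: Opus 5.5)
Your proof is correct, and your left-to-right direction is exactly the paper's argument: downward closure of $L[>]$ plus Modus Ponens gives $\varphi>\psi\models\varphi\to\psi$, and the formula $\alpha_T$ defining $\mathcal P(T)$ together with the Deduction Theorem gives the converse entailment. The paper omits the right-to-left direction altogether, and your routine verification of it is fine; note that the induction step for $L$'s own connectives tacitly reads ``downward closed language'' as one whose connectives preserve downward closure (not merely one whose formulas are downward closed), which is the standard reading.
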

\begin{proof}
We only give the proof of the left to right direction. Suppose $L$ a downward closed language that can express all flat team propositions, and $L[>]$ is downward closed and satisfies both Modus Ponens and the Deduction Theorem for $>$. We show that $\varphi>\psi\equiv\varphi\to\psi$.

   Suppose $T\models \varphi >\psi$. Let $S\subseteq T$ be such that $S\models\varphi$. By downward closure of $L[>]$, $S\models \varphi >\psi$ as well. Then, Modus Ponens for $>$ gives $S\models \psi$. Thus, $T\models \varphi \rightarrow\psi$.

  Conversely, suppose $T\models \varphi \rightarrow\psi$. Consider the flat team proposition $\mathcal{P}(T)$. There exists a formula $\alpha_T$ in $L$ such that $\llbracket\alpha_T\rrbracket=\mathcal{P}(T)$ (cf.  formula (\ref{cl-exp-compl-for}) in Section \ref{sec:pre}). Note that $\alpha_T,\varphi\models\psi$. Indeed, for any team $S$ such that $S\models\alpha_T$ and $S\models\varphi$, we have $S\subseteq T$, which implies $S\models\psi$, as $T\models \varphi \rightarrow\psi$. Now, $\alpha_T,\varphi\models\psi$ gives $\alpha_T\models \varphi>\psi$ by the Deduction Theorem for $L[>]$. Lastly, since $T\models\alpha_T$, we conclude that $T\models\varphi>\psi$, as required.\qed
\end{proof}

Next, we turn to logics with the upward closure property, the dual of downward closure. In contrast to downward closure, upward closed languages are not yet well understood in the literature. To the best of our knowledge, the only systematic treatment to date appears in very recent work by  H\"aggblom \cite{Haggblom2026b}, where she introduced the following language that is expressively complete for upward closed team propositions: 
\[
\varphi::=\top  \mid  p  \mid \neg p  \mid \varphi \wedge \varphi\mid \varphi \vvee \varphi\mid \vcenter{\hbox{%
      \ooalign{%
        \rotatebox[origin=c]{180}{$\triangle$}\cr
        \hidewidth\raisebox{0.12ex}{$\cdot$}\hidewidth\cr
      }%
    }}%
    \varphi
\]
where the propositional variables adopt the so-called existential semantics (rather than the usual one, which clearly violates upward closure) and the operator $\vcenter{\hbox{%
      \ooalign{%
        \rotatebox[origin=c]{180}{$\triangle$}\cr
        \hidewidth\raisebox{0.12ex}{$\cdot$}\hidewidth\cr
      }%
    }}$ defined as:
\begin{itemize}
    \item $T\models p$ ~~iff~~ there is an $v\in T$ such that $v(p)=1$
    \item $T\models \vcenter{\hbox{%
      \ooalign{%
        \rotatebox[origin=c]{180}{$\triangle$}\cr
        \hidewidth\raisebox{0.12ex}{$\cdot$}\hidewidth\cr
      }%
    }}%
    \varphi$ ~~iff~~ there exists $v\in T$ such that $\{v\}\models\varphi$. 
\end{itemize}


In the upward closed context, let us introduce a new conditional $\upto$, 
the dual of intuitionistic implication, which has the following semantics:
\begin{itemize}
\item $T\models \varphi \upto \psi$ ~~iff~~ for all $S\supseteq T$, $S\models \varphi$ implies $S\models\psi$.
\end{itemize}
This conditional $\upto$ clearly preserves upward closure, and respects Modus Ponens and the Deduction Theorem relative to upward closed languages $L$. In addition, if $L$ can express all co-flat team propositions, i.e., all team propositions of the form $Up(T)=\{S:S\supseteq T\}$
,
then $\upto$ is the unique such conditional; 
\corrf{i.e., an analogue of} 
Theorem \ref{int-imp-uniqueness} holds for $\upto$ over upward closed languages that can express all co-flat team propositions. 
\commf{Removed mention of proof in Appendix. After al, we are not giving a proof there, just promising it exists.}
\commy{I took out the mention of \emph{strong transitivity, import} and \emph{export} from here - as it does not any more feel natural to mention these... May find a way to mention these somewhere else}
This can be shown by a dual argument to that in the proof of Theorem \ref{int-imp-uniqueness},
where in showing $T\models\varphi\upto \psi$ implies $T\models\varphi> \psi$, we now take a formula $\alpha_T^\ast$ in $L$ such that $\llbracket\alpha_T^\ast\rrbracket=Up(T)=\{S:S\supseteq T\}$.

\section{Impossibility of  Well-behaved Conditionals for Union Closed Logics}\label{sec: impossibility}
\commy{OK, I basically aim to rewrite the text of the whole section, so I will not indicate my changes in the section separately}

Having seen two types of logics with well-behaved conditionals, in the rest of the paper we prove that such 
conditionals do not exist for non-classical logics with the other closure properties we consider. In this section, we first provide an argument that covers all languages that are \emph{expressively complete} for these closure properties. Since many known logics in the literature are expressively complete, this suggests that the requirements we imposed on well-behaved conditionals may need to be weakened. 
Yet, we shall show 
that the impossibility results persist for \corry{these languages even under weaker requirements. We prove this sharper impossibility result for union closed languages in this section; languages with other closure properties are treated in the next section.}



\corrf{Let us} first observe that if we drop the Preservation requirement, there is a conditional in the literature that satisfies both Modus Ponens and 
the Deduction Theorem, namely the material implication $\rightarrowtriangle$:
\begin{itemize}
    \item $T\models \varphi \rightarrowtriangle \psi$ iff $T\models \varphi$ implies $T\models \psi$. \hspace{10pt} (material implication)
\end{itemize}
Material implication, however, does not preserve any of the closure properties we consider (see Appendix \ref{Appendix-material-imp} for counterexamples). Among these, preservation of the empty team closure is 
easily recovered by considering a 
variant of the material implication:
\begin{itemize}
        \item $T\models \varphi \wmimp \psi$ iff $T=\emptyset$ \corrf{or} $T\models \varphi$ implies $T\models \psi$. \hspace{10pt} (weak material implication)
\end{itemize}


We now give a  characterization of these two versions of material implication that is similar to Theorem \ref{int-imp-uniqueness}. We  show that the (strong) material implication $\rightarrowtriangle$ is actually the unique conditional that satisfies the Modus Ponens and the Deduction Theorem, if the underlying language is sufficiently expressive\footnote{We are very grateful to an anonymous referee for suggesting this characterization and sketching the proof, which constitutes the crucial part of Theorem \ref{thm: reviewer's theorem}.\commy{good?}}, and similarly, its weaker version $\wmimp$ is the unique such for sufficiently expressive languages that are empty team closed.


\begin{thm}\label{thm: reviewer's theorem}
Let $L$ be a language that can express all team propositions of the  form $\{T\}$ for arbitrary $T$. Then $L[>]$  satisfies both Modus Ponens and the Deduction Theorem for $>$ iff $>$ is equivalent to the  material implication $\rightarrowtriangle$ over $L[>]$.

Similarly, let $L$ be an empty team closed language that can express all team propositions of the  form $\{\emptyset,T\}$ for arbitrary $T$. Then $L[>]$ is empty team closed and satisfies both Modus Ponens and the Deduction Theorem for $>$ iff $>$ is equivalent to the weak material implication $\wmimp$ over $L[>]$.
%

\end{thm}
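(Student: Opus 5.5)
The plan mirrors the proof of Theorem~\ref{int-imp-uniqueness}, with singleton team propositions $\{T\}$ playing the role that the flat propositions $\mathcal P(T)$ played there.

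\textbf{First claim, right to left.} Assume $>$ is equivalent to $\rightarrowtriangle$. Modus Ponens is immediate: if $T\models\varphi$ and $T\models\varphi\rightarrowtriangle\psi$, then $T\models\psi$. For the Deduction Theorem, suppose $\Gamma,\varphi\models\psi$ and $T\models\Gamma$. If $T\models\varphi$, then $T\models\psi$ by assumption. Either way $T\models\varphi\rightarrowtriangle\psi$.

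\textbf{First claim, left to right.} Assume Modus Ponens and the Deduction Theorem for $>$ over $L[>]$.
\begin{enumerate}
\item \emph{$\varphi>\psi$ implies $\varphi\rightarrowtriangle\psi$.} Let $T\models\varphi>\psi$. If $T\models\varphi$, Modus Ponens gives $T\models\psi$. Hence $T\models\varphi\rightarrowtriangle\psi$. No closure property is needed, which is why the downward-closure step of Theorem~\ref{int-imp-uniqueness} drops out.
\item \emph{$\varphi\rightarrowtriangle\psi$ implies $\varphi>\psi$.} Let $T\models\varphi\rightarrowtriangle\psi$, and take $\delta_T\in L$ with $\llbracket\delta_T\rrbracket=\{T\}$.
\begin{itemize}
\item We have $\delta_T,\varphi\models\psi$. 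Indeed, any $S$ satisfying both formulas is $T$ itself, and $T\models\varphi$ forces $T\models\psi$. (If $T\not\models\varphi$, the entailment holds vacuously.)
\item The Deduction Theorem then gives $\delta_T\models\varphi>\psi$.
\item Since $T\models\delta_T$, we get $T\models\varphi>\psi$.
\end{itemize}
\end{enumerate}

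\textbf{Second claim.} The argument is the same, with $\wmimp$ in place of $\rightarrowtriangle$ and $\{\emptyset,T\}$ in place of $\{T\}$.
\begin{itemize}
\item \emph{Right to left.} $\wmimp$ preserves the empty team property trivially, since $\emptyset$ satisfies every $\varphi\wmimp\psi$. So $L[>]$ stays empty team closed by induction. Modus Ponens holds: if $T=\emptyset$, then $T\models\psi$ because $\psi$ is empty team closed; otherwise the material clause applies. The Deduction Theorem holds as in the first claim.
\item \emph{$\varphi>\psi$ implies $\varphi\wmimp\psi$.} This is Modus Ponens again; the case $T=\emptyset$ is trivial.
\item \emph{$\varphi\wmimp\psi$ implies $\varphi>\psi$.} Let $T\models\varphi\wmimp\psi$, and take $\epsilon_T$ with $\llbracket\epsilon_T\rrbracket=\{\emptyset,T\}$. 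We check $\epsilon_T,\varphi\models\psi$: a team $S$ satisfying both is either $\emptyset$, which satisfies $\psi$ by empty team closure, or $T$, which satisfies $\psi$ because $T\models\varphi$. The Deduction Theorem and $T\models\epsilon_T$ then give $T\models\varphi>\psi$.
\end{itemize}

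\textbf{Main subtlety.} The key point is the construction of $\delta_T$ and $\epsilon_T$, which expressivity supplies directly. What needs care is the reading of \emph{equivalent over $L[>]$}: the argument must apply to formulas $\varphi,\psi$ that themselves contain $>$. It does, because Modus Ponens and the Deduction Theorem are assumed for all of $L[>]$, and the witnesses $\delta_T,\epsilon_T$ only need to lie in $L\subseteq L[>]$.
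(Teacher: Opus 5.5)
Your proposal is correct and follows the paper's proof. Modus Ponens gives $\varphi>\psi\models\varphi\rightarrowtriangle\psi$ (resp.\ $\varphi>\psi\models\varphi\wmimp\psi$). The Deduction Theorem, applied to an $L$-formula defining $\{T\}$ (resp.\ $\{\emptyset,T\}$), gives the converse, with empty team closure handling the case $S=\emptyset$.

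One point is left implicit, as it also is in the paper, which omits the right-to-left direction as trivial. Your ``by induction'' step for the empty team closure of $L[>]$ assumes that the connectives of $L$ preserve the empty team property on arbitrary arguments, not only that $L$'s own formulas are empty team closed.
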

\begin{proof}
   We give the detailed proof of the characterization for the weak material implication $\wmimp$; the proof for the strong one $\rightarrowtriangle$ is similar. We prove the nontrivial  left to right direction. Suppose $L[>]$ is empty team closed and satisfies both Modus Ponens and the Deduction Theorem. We prove that $\varphi>\psi\equiv \varphi\wmimp\psi$ for any formulas $\varphi,\psi$ in $L[>]$.\commy{Here I think the original result WAS Incorrect, because we have to take the formulas from $L[>]$.}

   Clearly, $\emptyset\models \varphi\wmimp\psi$ by definition, and $\emptyset\models \varphi>\psi$ as $L[>]$ is empty team closed. Now, take $T\neq\emptyset$.
   Suppose  $T\models \varphi > \psi$ and $T\models \varphi$. By Modus Ponens for $>$,  $T\models \psi$, giving $T\models \varphi \wmimp\psi$. Conversely, suppose $T\models \varphi \wmimp\psi$. Let $\chi_T$ be an $L$-formula such that $\llbracket\chi_T\rrbracket = \{\emptyset, T\}$. It suffices to show that $\chi_T, \varphi \models \psi$, as then we would obtain $\chi_T \models  \varphi > \psi$ by the Deduction Theorem for $>$, from which $T\models \varphi > \psi$ would follow as $T\models \chi_T$.
   
   Now, to show $\chi_T, \varphi \models \psi$, suppose  $S\models \chi_T$ and $S\models\varphi$. Then either $S=\emptyset$ or $S=T$. If $S=\emptyset$,  since $L[>]$ is empty team closed, we have $S\models \psi$. Otherwise, if $S=T$, since $S\models \varphi$ and $S\models \varphi \wmimp\psi$ we obtain $S\models \psi$ by Modus Ponens, as required.\qed
    
    \commy{Question/comment: Your original proof uses a set of formula $\Gamma_T$ instead of a single one $\chi_T$-- this can indeed work. Then the assumption is actually: $\{\emptyset,T\}$ can be defined by a set of $L$ formulas. ($L$ can express $\{\emptyset,T\}$ usually means that there is a single formula that defines the proposition)}\commf{Good point. I believe that, as long as we look at definability of $\{\emptyset, T\}$ for a $T$ with a finite variable domain (in propositional team semantics), the two notions of definability should coincide.}
\end{proof}
\commy{We should emphasize here that this argument relies heavily on the fact that L can express *all* those properties, while our arguments in the main part only need the existence of *3* formulas} 


Our impossibility result follows now as a corollary, since the weak and strong material implication clearly do not preserve union closure,  convexity, intersection closure, and some of their combinations (see Appendix \ref{Appendix-material-imp} for the proof).\commy{work in progress}

\begin{cor}\label{imp-res-general}
    If $L$ is expressively complete for  union and empty team  closed team propositions, then there is no binary operator $>$ such that $L[>]$ is closed under empty team and unions, and satisfies both Modus Ponens and the Deduction theorem for $>$.

\corry{Similarly for languages expressively complete for any of the following classes of team propositions: union closed, convex, intersection closed, convex and union closed, intersection and empty team closed, and convex and intersection closed.}\commy{Is this better?}   
\end{cor}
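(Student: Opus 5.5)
The plan is to argue by contradiction, reducing everything to Theorem \ref{thm: reviewer's theorem}. Suppose $L$ is expressively complete for union and empty team closed team propositions and that $>$ is such that $L[>]$ is empty team closed, union closed, and satisfies Modus Ponens and the Deduction Theorem. The team proposition $\{\emptyset,T\}$ is trivially empty team closed. It is also union closed, since the only nontrivial union is $\emptyset\cup T=T$. By expressive completeness, $L$ therefore expresses every $\{\emptyset,T\}$. The second half of Theorem \ref{thm: reviewer's theorem} then gives $\varphi>\psi\equiv\varphi\wmimp\psi$ for all $\varphi,\psi$ in $L[>]$. Consequently, for any $\varphi,\psi\in L$, the formula $\varphi>\psi$ defines the same team proposition as $\varphi\wmimp\psi$. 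This proposition must be union closed, because $L[>]$ is.

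It then suffices to exhibit $L$-definable (i.e.\ empty team and union closed) propositions $\llbracket\varphi\rrbracket,\llbracket\psi\rrbracket$ for which $\varphi\wmimp\psi$ is not union closed. Take valuations $u\neq w$, let $\varphi=\psi$ be a classical formula defining $\mathcal P(\{u\})$, and let $\psi$ instead define $\{\emptyset,\{u\}\}\cup\{\{w\}\}$ closed under unions, i.e.\ $\{\emptyset,\{u\},\{w\},\{u,w\}\}$. That choice fails to separate, so I adjust it. Put $\varphi$ to define $\{\emptyset,\{u\},\{u,w\}\}$, which is union closed, and $\psi$ to define $\{\emptyset,\{u\}\}$. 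Then $\{u\}\models\varphi\wmimp\psi$ and $\{w\}\models\varphi\wmimp\psi$, since $\{w\}\not\models\varphi$. However, $\{u,w\}\models\varphi$ and $\{u,w\}\not\models\psi$, so union closure fails. This gives the contradiction.

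For the other classes listed in the statement, the same template applies. First check that the relevant propositions of the form $\{T\}$ or $\{\emptyset,T\}$ belong to the class; singletons $\{T\}$ are convex, union closed and intersection closed, so the first half of Theorem \ref{thm: reviewer's theorem} applies when the empty team is not required. Then supply a counterexample to preservation for $\rightarrowtriangle$ or $\wmimp$. Convexity is broken by $\varphi$ defining $\{\{u\}\}$ and $\psi$ defining $\{\emptyset\}$ or an empty proposition. In that case $\emptyset$ and $\{u,w\}$ satisfy the implication while $\{u\}$ does not, and $\emptyset\subseteq\{u\}\subseteq\{u,w\}$. Intersection closure is broken dually, taking $\varphi$ true only at $\{u\}$ with $\psi$ false there, so that $\{u,w\}$ and $\{u,v\}$ satisfy the implication but their intersection $\{u\}$ does not.

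The main obstacle is bookkeeping rather than depth. For each combined class, the counterexample propositions must themselves lie in the class, and the failing instance must respect the empty team convention, since $\emptyset$ always satisfies $\wmimp$. I would handle each combination by a small explicit table over at most three valuations, as deferred to Appendix \ref{Appendix-material-imp}.
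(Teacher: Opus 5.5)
Your proposal is correct and follows the paper's route: expressive completeness gives you every $\{\emptyset,T\}$ (resp.\ $\{T\}$), Theorem~\ref{thm: reviewer's theorem} then forces $>$ to coincide with $\wmimp$ (resp.\ $\rightarrowtriangle$), and the failure of the relevant closure property for that implication yields the contradiction. The only difference is that you build the non-preservation witnesses directly from definable team propositions rather than citing Lemma~\ref{lemma: material implication}. One small point in your favour: your union-closure witness $\{\emptyset,\{u\},\{u,w\}\}$ is itself empty team closed, whereas the appendix's $\blacklozenge p\land q$ is not, so your witness lies in $L$ in the first case without any adjustment.
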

\begin{proof}
Suppose such a binary operator $>$ exists for a language $L$ that is expressively complete for empty team and union closed team propositions. Observe that each team proposition of the form $\{\emptyset,T\}$ is closed under empty team and unions, and $L$ can thus express all such team propositions. It then follows from Theorem \ref{thm: reviewer's theorem} that $>$ is equivalent to $\wmimp$, which leads to a contradiction as $\wmimp$ does not preserve union closure. 

The other cases follows similarly, by observing that each team proposition $\{T\}$ is convex and closed under unions and intersections, and additionally $\emptyset\in \{\emptyset, T\}$. 
\qed
\end{proof}

\commy{I incorporated more properties (those you mentioned, in the commented out part) to the corollary}

The above result applies to a number of known expressively complete logics in the literature, such as 
the extension of the classical language $[\neg,\land,\lor]$ with inclusion atoms $\arrowvec{a}\subseteq \arrowvec{b}$ with each $a_i,b_i\in Prop\cup\{\bot,\top\}$ (known as {\em propositional inclusion logic}), which is expressively complete for empty team  and union closed team propositions \cite{Yang2022}, where
\begin{itemize}
    \item $T\models \arrowvec a \subseteq \arrowvec b$  ~~iff~~ for each $v\in T$, there exists $u\in T$ such that $v(\arrowvec a) = u(\arrowvec b)$.  
\end{itemize}
However, Corollary \ref{imp-res-general} or Theorem \ref{thm: reviewer's theorem} does not apply, for example, to a weaker version of propositional inclusion logic with inclusion atoms restricted to the form $\arrowvec p\subseteq \arrowvec q$ (i.e., without occurrences of $\bot,\top$), 
as this weaker language is known (\cite{Yang2022}) not to be expressively complete, \corrf{and even flat in the one-variable case $Prop=\{p\}$. It cannot then} 
 express all propositions of the form $\{\emptyset, T\}$, 
\corrf{because e.g. the proposition $\{\emptyset, T\}$, where $T$ is the full team \corry{in one variable}  with valuations $u(p)=0$ and $v(p)=1$, is not flat.} 
\commy{I removed the mention of first-order logic here--desperate for space...}

In addition to considering weaker logics, a natural alternative for avoiding Corollary \ref{imp-res-general} or Theorem \ref{thm: reviewer's theorem} while still obtaining relatively well-behaved conditionals is to \corrf{impose restrictions on the kinds of} \corrf{formulas allowed in} 
Modus Ponens and Deduction Theorem \corrf{schemes}. 
Plausible as this approach may seem, we show in the rest of the paper that \corrf{several} impossibility results \corrf{are still obtained when replacing either Modus Ponens or the Deduction Theorem with their Feeble counterparts.} 

One crucial fact on which our argument relies is the failure of distributivity of conjunction $\wedge$ over disjunction $\vee$ in non-downward closed languages. We begin by proving a general characterisation of the distributive law. 




\begin{prop}\label{failure-distr}
Let $L$ be a language that can express all flat team propositions, and $\varphi$ a formula of $L$. 
Then $\varphi$ is downward closed iff
    $ \varphi\land (\psi\lor\chi) \models (\varphi \land \psi) \lor (\varphi \land \chi)$ holds for all formulas $\psi,\chi$ of $L$.
\end{prop}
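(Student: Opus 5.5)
We prove the two directions separately.

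\emph{Downward closure implies distributivity.} Assume $\varphi$ is downward closed and $T\models \varphi\land(\psi\lor\chi)$. The semantics of $\lor$ gives $T_1,T_2$ with $T=T_1\cup T_2$, $T_1\models\psi$ and $T_2\models\chi$. Since $T_1,T_2\subseteq T$ and $T\models\varphi$, downward closure of $\varphi$ gives $T_1\models\varphi$ and $T_2\models\varphi$. Hence $T_1\models\varphi\land\psi$ and $T_2\models\varphi\land\chi$, so $T\models(\varphi\land\psi)\lor(\varphi\land\chi)$. This direction does not use the expressivity assumption.

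\emph{Distributivity implies downward closure.} We argue contrapositively. Suppose $\varphi$ is not downward closed, so there are $S\subseteq T$ with $T\models\varphi$ and $S\not\models\varphi$. We want $\psi,\chi\in L$ for which the law fails at $T$.

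\begin{itemize}
\item Take $\psi:=\alpha_S$, an $L$-formula defining the flat proposition $\mathcal P(S)$.
\item Take $\chi:=\alpha_{T\setminus S}$, defining $\mathcal P(T\setminus S)$.
\end{itemize}
Both exist because $L$ expresses all flat team propositions, via formula (\ref{cl-exp-compl-for}). Then $T=S\cup(T\setminus S)$ witnesses $T\models\psi\lor\chi$, so $T\models\varphi\land(\psi\lor\chi)$.

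Now suppose, for contradiction, that $T\models(\varphi\land\alpha_S)\lor(\varphi\land\alpha_{T\setminus S})$. Then $T=T_1\cup T_2$ with $T_1\subseteq S$, $T_2\subseteq T\setminus S$, $T_1\models\varphi$ and $T_2\models\varphi$. Since $S$ and $T\setminus S$ are disjoint and cover $T$, this forces $T_1=S$ and $T_2=T\setminus S$. So $S\models\varphi$, contradicting the choice of $S$. Hence distributivity fails for these $\psi,\chi$.

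The main step is the choice of $\psi,\chi$. Using $\alpha_S$ and $\alpha_{T\setminus S}$ forces every split of $T$ to coincide with the partition $\{S,T\setminus S\}$. This exactness is needed because the split is constrained only by containment, not equality. It relies on the fact from Section~\ref{sec:pre} that $R\models\alpha_U$ iff $R\subseteq U$.

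Degenerate cases need no special treatment. If $S=\emptyset$, then $T_1=\emptyset$ must satisfy $\varphi$, which is the desired contradiction. If $S=T$, the assumption $S\not\models\varphi$ is already impossible.
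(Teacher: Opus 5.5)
Your proposal is correct and follows essentially the same route as the paper: you use the same witnesses $\alpha_S$ and $\alpha_{T\setminus S}$ defining $\mathcal P(S)$ and $\mathcal P(T\setminus S)$, and the same argument that any split of $T$ must put $T_1=S$, so that $S\models\varphi$, a contradiction. The only difference is that you also write out the easy left-to-right direction, which the paper leaves as an exercise.
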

\begin{proof}
The left to right direction is easy to verify. 
For the other direction, suppose $\varphi$ is not downward closed. Then there are teams $S, T$ such that $S\subseteq T$, $T\models\varphi$ and $S\not\models\varphi$. Consider the team propositions $\mathbb{P}_S=\mathcal{P}(S)$ and $\mathbb{P}_{T\setminus S}=\mathcal{P}(T\setminus S)$. These two team propositions are clearly flat (as, e.g., $\{v_1\},\dots,\{v_n\}\in \mathcal{P}(S)$ iff $\{v_1,\dots,v_n\}\subseteq S$ iff $\{v_1,\dots,v_n\}\in \mathcal{P}(S)$). Since $L$ 
can express all flat team propositions, there are \corry{(flat)}\commy{the proof of Thm 3.5 needs these flat} formulas $\alpha_{S}$ and $\alpha_{T\setminus S}$ in  $L$  
(cf. formula (\ref{cl-exp-compl-for}) in Section \ref{sec:pre}) such that $\mathcal{P}(S)=\llbracket\alpha_{S} \rrbracket$ and $\mathcal{P}(T\setminus S)=\llbracket\alpha_{T\setminus S} \rrbracket$. 
We have that $\varphi\wedge (\alpha_S\vee\alpha_{T\setminus S})\not\models (\varphi\wedge \alpha_S)\vee(\varphi\wedge\alpha_{T\setminus S})$. 

To see why, first note that $T\models \varphi$. Moreover,  $T\models \alpha_S\vee\alpha_{T\setminus S}$, since $T=S\cup (T\setminus S)$, $S\models\alpha_{S}$ (for $S\in \mathcal{P}(S)$) and $T\setminus S\models\alpha_{T\setminus S}$ (for $T\setminus S\in \mathcal{P}(T\setminus S)$). Thus, $T\models \varphi\wedge (\alpha_S\vee\alpha_{T\setminus S})$. 

On the other hand, if $T\models (\varphi\wedge \alpha_S)\vee(\varphi\wedge\alpha_{T\setminus S})$, then there are $T_1,T_2$ such that $T=T_1\cup T_2$, $T_1\models \varphi\wedge \alpha_S$ and $T_2\models \varphi\wedge\alpha_{T\setminus S}$. It follows that $T_1\subseteq S$ and $T_2\subseteq T\setminus S$. Since we also have $T=T_1\cup T_2$, it must be that $T_1=S$. But then $T_1\models \varphi$ would imply $S\models\varphi$, which contradicts our assumption.\qed
\end{proof}

Distributivity thus fails in any union closed language that properly extends the classical language $[\neg,\wedge,\vee]$ (such as propositional inclusion logic $[\neg,\wedge,\vee,\subseteq]$ or its weaker variant discussed above), as it contains non-downward closed formulas (e.g., $p \subseteq q$). 




Our argument for the impossibility result below also uses a simple characterization of union closure in terms of the idempotence of disjunction $\vee$: 
\begin{lem}\label{prop: inferences from closure properties}
        $\varphi$ is union closed iff $\varphi \lor \varphi \models \varphi$.

\end{lem}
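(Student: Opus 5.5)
The plan is to prove the two directions separately, working directly from the team semantics of $\vee$ and the definition of union closure. Since $\varphi \lor \varphi \models \varphi$ means that every team satisfying $\varphi\vee\varphi$ satisfies $\varphi$, both directions come down to unfolding the clause for $\vee$.

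For the left-to-right direction, assume $\varphi$ is union closed and let $T\models \varphi\vee\varphi$. By the semantics of disjunction, there are teams $T_1,T_2$ with $T=T_1\cup T_2$, $T_1\models\varphi$ and $T_2\models\varphi$. Union closure then gives $T=T_1\cup T_2\models\varphi$. Since $T$ was arbitrary, $\varphi\vee\varphi\models\varphi$.

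For the right-to-left direction, assume $\varphi\vee\varphi\models\varphi$ and let $S\models\varphi$ and $T\models\varphi$. Splitting $S\cup T$ as $S$ and $T$ witnesses $S\cup T\models\varphi\vee\varphi$, and the entailment yields $S\cup T\models\varphi$. Hence $\varphi$ is union closed.

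There is no real obstacle here. The only point to check is that the clause for $\vee$ quantifies over arbitrary covers $T=T_1\cup T_2$, with no disjointness requirement. This is what allows the pair $S,T$ itself to serve as the witnessing split in the second direction.
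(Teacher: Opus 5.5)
Your proposal is correct and matches the paper's approach: the left-to-right direction is exactly the paper's argument, unfolding the clause for $\vee$ and applying union closure. The paper writes out only that direction; your converse, which uses the cover $S\cup T$ by $S$ and $T$ as the witnessing split (no disjointness is needed), is the routine step it leaves implicit.
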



\begin{proof}
We only give the proof for the left to right direction. 
Suppose $T\models \varphi \lor \varphi$. Then, there exist $R,S$ such that $T=R\cup S$, $R\models \varphi$ and $S\models\varphi$, which imply $T\models\varphi$, since $\varphi$ is union closed.
%
%
%
\qed\end{proof}

We are ready to prove another impossibility result, \corrf{which assumes the} Feeble Deduction Theorem. \corry{The main argument is due to Hardegree \cite{Har1974}.}\commy{Give Hardegree more credit}

\begin{thm}\label{thm: Hardegree impossibility}
For any union and empty team closed language $L$ that properly extends the classical language $[\neg,\wedge,\vee]$\commy{``(union closed) classical language''},  there is no binary operator $>$ such that $L[>]$ is union closed  and  satisfies both Modus Ponens and the Feeble Deduction Theorem for $>$.


%
\end{thm}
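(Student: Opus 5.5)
The plan is to recast Hardegree's argument. We show that a conditional $>$ as described would force the distributive law $\varphi\wedge(\psi\vee\chi)\models(\varphi\wedge\psi)\vee(\varphi\wedge\chi)$ for some non-downward closed $\varphi$. Proposition \ref{failure-distr} rules this out, giving a contradiction.

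\emph{Step 1: find a suitable $\varphi$.} Since $L$ properly extends $[\neg,\wedge,\vee]$, I take this to mean that $L$ contains a formula $\varphi$ that is not flat (this is the one point where the hypothesis has to be interpreted). Every formula of $L$ is union closed and empty team closed. Flatness is equivalent to the conjunction of union closure, downward closure and empty team closure, so $\varphi$ cannot be downward closed. Moreover $L$ contains $[\neg,\wedge,\vee]$, which is expressively complete for flat team propositions, so $L$ can express all of them. We may therefore apply Proposition \ref{failure-distr} and its proof. This yields teams $S\subseteq T$ and flat formulas $\alpha_S,\alpha_{T\setminus S}$ of $L$ with
\[
\varphi\wedge(\alpha_S\vee\alpha_{T\setminus S})\not\models\theta,
\qquad\text{where }\theta:=(\varphi\wedge\alpha_S)\vee(\varphi\wedge\alpha_{T\setminus S}).
\]

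\emph{Step 2: derive the forbidden entailment from $>$.}
\begin{enumerate}[(i)]
\item We have $\alpha_S,\varphi\models\varphi\wedge\alpha_S\models\theta$. The last entailment holds because $\emptyset\models\varphi\wedge\alpha_{T\setminus S}$ by empty team closure, so any team satisfying $\varphi\wedge\alpha_S$ can be split as itself $\cup\,\emptyset$.
\item Since $\alpha_S$ is flat, the Feeble Deduction Theorem with $\Delta=\{\alpha_S\}$ gives $\alpha_S\models\varphi>\theta$. By the same reasoning, $\alpha_{T\setminus S}\models\varphi>\theta$.
\item By monotonicity of $\vee$ (split the team and apply each entailment to its part), $\alpha_S\vee\alpha_{T\setminus S}\models(\varphi>\theta)\vee(\varphi>\theta)$.
\item The formula $\varphi>\theta$ belongs to $L[>]$, which is union closed. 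Lemma \ref{prop: inferences from closure properties} therefore gives $(\varphi>\theta)\vee(\varphi>\theta)\models\varphi>\theta$.
\item Combining (iii) and (iv), $\varphi\wedge(\alpha_S\vee\alpha_{T\setminus S})\models\varphi\wedge(\varphi>\theta)$. Modus Ponens for $>$ then gives $\varphi\wedge(\alpha_S\vee\alpha_{T\setminus S})\models\theta$.
\end{enumerate}
This contradicts Step 1.

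\emph{Main obstacle.} Once Proposition \ref{failure-distr} is in hand, every step is a routine check. The real points are two. First, Step 1 depends on reading ``properly extends'' as guaranteeing a non-flat formula, and on the empty team hypothesis to upgrade non-flatness to failure of downward closure. Second, the formulas $\alpha_S,\alpha_{T\setminus S}$ produced in Step 1 must be flat, since otherwise only the full Deduction Theorem, not its Feeble variant, could be applied in (ii). They are flat by construction, being classical formulas of the form (\ref{cl-exp-compl-for}).
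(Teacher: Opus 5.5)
Your proposal is correct and follows the paper's proof essentially step for step. You take flat $\alpha_S,\alpha_{T\setminus S}$ from Proposition \ref{failure-distr}, derive $\alpha_S\models\varphi>\theta$ and $\alpha_{T\setminus S}\models\varphi>\theta$ via empty team closure and the Feeble Deduction Theorem, collapse $(\varphi>\theta)\vee(\varphi>\theta)$ using union closure and Lemma \ref{prop: inferences from closure properties}, and recover the failed distributive law by Modus Ponens. The one thing you add is the explicit argument that a non-flat formula in a union and empty team closed language cannot be downward closed; the paper leaves this implicit, and you handle it correctly.
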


\begin{proof}
    Suppose that such an operator $>$ exists. Let $\varphi,\psi,\chi$ be formulas in $L$ for which $ \varphi\land (\psi\lor\chi) \not\models (\varphi \land \psi) \lor (\varphi \land \chi)$ with $\psi,\chi$ flat (such formulas exist by Proposition \ref{failure-distr} and its proof).  
%
Since $L$ is empty team closed, we have 
\begin{equation*}
    \varphi \land \psi \models (\varphi \land \psi) \lor (\varphi \land \chi) ~\text{ and }~    \varphi \land \chi \models (\varphi \land \psi) \lor (\varphi \land \chi),
    \end{equation*} 
which imply, by  the \corrf{Feeble} Deduction Theorem,  that
 \begin{equation*}\label{imp_main_thm_eq2}
    \psi \models \varphi > (\varphi \land \psi) \lor (\varphi \land \chi)  ~\text{ and }~    \chi \models \varphi > (\varphi \land \psi) \lor (\varphi \land \chi).
    \end{equation*} 
Since the $L[>]$-formula $\varphi > (\varphi \land \psi) \lor (\varphi \land \chi)$  is union closed, 
by Lemma \ref{prop: inferences from closure properties}, we derive
\begin{equation*}\label{imp_main_thm_eq3}
\psi \lor \chi \models (\varphi > (\varphi \land \psi) \lor (\varphi \land \chi)) \lor (\varphi > (\varphi \land \psi) \lor (\varphi \land \chi)) \models \varphi > (\varphi \land \psi) \lor (\varphi \land \chi).
    \end{equation*}
    Finally, by Modus Ponens, 
    $\varphi\land (\psi\lor\chi) \models (\varphi \land \psi) \lor (\varphi \land \chi)$, which is a contradiction.
 \qed\end{proof}

 We remark that the argument above actually depends only on the existence in $L$ of three formulas used in the counterexample for distributivity. Languages of this kind are in general not expressively complete. One example is the weaker version of propositional inclusion logic discussed earlier. \corrf{Another example is \emph{first-order inclusion logic} (\cite{Gal2012}), \corry{where Proposition \ref{failure-distr} does not apply but other counterexamples for distributivity can be found.}
\footnote{For example, 
\corry{without going into details:}
$x\subseteq y \land (P(x)\lor P(y)) \not\models (x\subseteq y \land P(x)) \lor (x\subseteq y \land P(y))$).}}

\commy{I removed a lot of things here, including the discussion about first-order logic--desperate for space... These can be included in our journal version though.}

\section{Impossibility in Convex Logics and Intersection Closed Logics}\label{sec: impossibility in convex logics}
\commy{I combined two sections. Also, I will rewrite the text everywhere, and thus will not mark them}

In this section, we prove  \corrf{sharper} impossibility results 
for non-classical convex logics and intersection closed logics by adapting \corrf{the argument from} 
Theorem \ref{thm: Hardegree impossibility}.

%
%
 %

We first consider convex languages. A typical example, which is also expressively complete\commy{now I don't think we need to emphasize that these are expressively complete - thus I mention this only briefly here}, is that given in \cite{AntKnu2025}, which is obtained by extending the classical language $[\bot,\wedge,\to]$ with the outer global disjunction $\vveedot$ 
 and the epistemic might $\blacklozenge$, defined as
    \begin{itemize}
        \item $T\models \varphi\vveedot\psi$ ~~iff~~ there exists $S\supseteq T$ such that $S\models\varphi$ or $S\models\psi$.
        \item $T\models \blacklozenge\varphi$ ~~iff~~ there is a nonempty team $S\subseteq T$ such that $S\models\varphi$.
   \end{itemize}

We can characterize convexity  in terms of $\vveedot$ and  $\blacklozenge$: 


\begin{lem}\label{convexity_char_idem}
%
 If $\varphi$ is convex, then  $\blacklozenge\varphi,\varphi\vveedot\varphi\models\varphi$; the converse  holds if $\emptyset\not\models\varphi$.
\end{lem}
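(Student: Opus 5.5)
Both directions come from unfolding the semantics of $\blacklozenge$ and $\vveedot$; no earlier result is needed. Note that $\varphi\vveedot\varphi$ unfolds to: $T\models\varphi\vveedot\varphi$ iff there is $S\supseteq T$ with $S\models\varphi$. Likewise, $T\models\blacklozenge\varphi$ iff there is a nonempty $R\subseteq T$ with $R\models\varphi$. So the premises $\blacklozenge\varphi,\varphi\vveedot\varphi$ say exactly that $T$ lies between a nonempty subteam satisfying $\varphi$ and a superteam satisfying $\varphi$.

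\emph{Left to right.} Assume $\varphi$ is convex and $T\models\blacklozenge\varphi$ and $T\models\varphi\vveedot\varphi$. We pick a nonempty $R\subseteq T$ with $R\models\varphi$ and some $S\supseteq T$ with $S\models\varphi$. Then $R\subseteq T\subseteq S$, and convexity yields $T\models\varphi$.

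\emph{Converse, under $\emptyset\not\models\varphi$.} Assume $\blacklozenge\varphi,\varphi\vveedot\varphi\models\varphi$, and let $R\subseteq T\subseteq S$ with $R,S\models\varphi$; we must show $T\models\varphi$. Since $\emptyset\not\models\varphi$, we have $R\neq\emptyset$, so $R$ witnesses $T\models\blacklozenge\varphi$. The superteam $S$ witnesses $T\models\varphi\vveedot\varphi$. The entailment then gives $T\models\varphi$, so $\varphi$ is convex.

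The only point needing care is the role of the hypothesis $\emptyset\not\models\varphi$. It is what guarantees that the lower witness $R$ is nonempty, as $\blacklozenge$ requires. Without it, the case $R=\emptyset$ cannot be converted into a $\blacklozenge$-witness, which is why the converse is stated only for formulas lacking the empty team property.
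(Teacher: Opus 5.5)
Your proof is correct and essentially matches the paper's: the forward direction uses the same sandwich $R\subseteq T\subseteq S$ with $R\neq\emptyset$, and your converse is the direct form of the paper's contrapositive argument, using $\emptyset\not\models\varphi$ to make $R$ a $\blacklozenge$-witness. You also rightly skip the paper's separate case of an unsatisfiable $\varphi$, since your argument already covers it vacuously.
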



\begin{proof}

%
%

    If $\varphi$ is satisfied by no team, then $\varphi$ is convex and $\blacklozenge\varphi,\varphi\vveedot\varphi\models\varphi$ (as no team satisfies $\blacklozenge\varphi$ and $\varphi\vveedot\varphi$). 
    
Now, assume that $\varphi$ is satisfied by some teams.
Suppose $\varphi$ is convex. Suppose $T\models\blacklozenge\varphi$ and $T\models\varphi\vveedot\varphi$. Then there are $R,S$ such that $\emptyset\neq R\subseteq T\subseteq S$, $R\models\varphi$ and $S\models\varphi$. By convexity, we conclude $T\models\varphi$.

For the converse direction, suppose $\emptyset\not\models\varphi$, and suppose $\varphi$ is not convex. Then there are $R,S,T$ such that $R\subseteq T\subseteq S$, $R\models\varphi$, $S\models\varphi$ and $T\not\models\varphi$. Clearly, $T\models \varphi\vveedot\varphi$. 
By assumption, $R\neq \emptyset$, which then implies $T\models \blacklozenge\varphi$. This shows $\blacklozenge\varphi,\varphi\vveedot\varphi\not\models\varphi$.
\qed\end{proof}

 \begin{rmk}\commy{This can go to footnote, in case desperate}
     The above  can be easily strengthened to a full characterisation: $\varphi$ is convex iff $\Diamond\varphi,\varphi\vveedot\varphi\models\varphi$, where $\Diamond$ is a closely related variant of $\blacklozenge$, defined as
        $T\models \Diamond \varphi$ iff there is a team $S\subseteq T$ such that $S\models\varphi$.
 \end{rmk}


Similar to the union closed context, in convex logics, conjunction does not distribute over the outer global disjunction $\vveedot$, even under a stronger assumption; specifically, $\varphi\wedge( \psi\vveedot\chi),\blacklozenge\psi 
       \models (\varphi \land \psi) \vveedot (\varphi \land \chi)$ fails in general:

\begin{exm}\label{non-distr-conv}
\corry{We construct a counterexample with $\varphi$  classical. Consider the team $T= \{u,v\}$ with the valuations $u,v$ satisfying $u(p,q)=(1,1)$, $v(p,q)=(1,0)$. Let $\varphi=p$, $\psi= q$ and $\chi=\blacklozenge \neg p$. It is easy to verify that T 
satisfies the antecedents $p\wedge (q\vveedot \blacklozenge \neg p)$ and $\blacklozenge q$, but not the consequent $(p\wedge q)\vveedot(p\wedge \blacklozenge\neg p)$.}
\end{exm}

\tosimplydelete{
\begin{exm}\label{non-distr-conv-2}
\textcolor{red}{TO BE REMOVED:}
We construct two counterexamples \corry{to distributivity with the strengthened assumption}: 
one with $\varphi$ classical, the other one with $\psi,\chi$ classical. \corry{Consider the team $T= \{u,v\}$ with the valuations $u,v$ satisfying $u(p,q)=(1,1)$, $v(p,q)=(1,0)$.}

(1) Let $\varphi=p$, $\psi= q$ and $\chi=\blacklozenge \neg p$. It is easy to verify that \corrf{T} 
satisfies the antecedents $p\wedge (q\vveedot \blacklozenge \neg p)$ and $\blacklozenge q$, but not the consequent $(p\wedge q)\vveedot(p\wedge \blacklozenge\neg p)$.

(2)  Let $\varphi= \corrf{\blacklozenge \neg p \vveedot q\vveedot \neg q}$, $\psi= p$, and $\chi= q$. It is easy to verify that \corrf{T} 
satisfies $(\corrf{\blacklozenge \neg p \vveedot q\vveedot \neg q}) \land (p \vveedot q)$ and $ \blacklozenge p$, but not $((\corrf{\blacklozenge \neg p \vveedot q\vveedot \neg q}) \land p) \vveedot ((\corrf{\blacklozenge \neg p \vveedot q\vveedot \neg q}) \land q)$.
\commy{OK, desperate for space, I took out the proofs}
\end{exm}
}

\commy{Construction site: This is not meant to be included in the paper}

\tosimplydelete{
\begin{prop}\label{failure-distr-convex}
\textcolor{red}{NOT meant to be included in the current version:} Let $L$ be a language that can express all flat team propositions, and $\psi,\chi$ are formulas of $L$. 
Then $\psi$ and $\chi$ are downward closed iff
    $ \varphi\land (\psi\vveedot\chi) \models (\varphi \land \psi) \vveedot (\varphi \land \chi)$ holds for all formulas $\varphi$ of $L$.
\end{prop}
\begin{proof}
$\Longrightarrow$: Suppose $\psi$ and $\chi$ are downward closed. Suppose $T\models \varphi\wedge (\psi\vveedot \chi)$. Then there exists $S\supseteq T$ such that $S\models\psi$ or $S\models\chi$. W.l.o.g., assume $S\models\psi$. Since $\psi$ is downward closed, $T\models\psi$. Thus, $T\models \varphi\wedge \psi$, which gives $T\models (\varphi \land \psi) \vveedot (\varphi \land \chi)$.

$\Longleftarrow$: Suppose either $\psi$ or $\chi$ is not downward closed. W.l.o.g., assume $\psi$ is not downward closed. Thus, there exist $T,S$ such that $S\supseteq T$, $S\models \psi$ while $T\not\models\psi$. Consider the flat team proposition $\mathcal{P}(T)$. Let $\alpha_T$ be an $L$-formula such that $\llbracket\alpha_T\rrbracket=\mathcal{P}(T)$. We show that $\alpha_T\wedge (\psi\vveedot \chi)\not\models (\alpha_T\wedge\psi)\vveedot (\alpha_T\wedge\chi)$.

Clearly, $T\models \alpha_T$ and $T\models \psi\vveedot \chi$, as $S\models\psi$ for $S\supseteq T$. On the other hand, for any $R\supseteq T$ such that $R\models \alpha_T\wedge\psi$ or $R\models \alpha_T\wedge\chi$, we must have $R\subseteq T$. Thus, $R=T$. By the choice of $T$, we have $R\not\models \chi$. \textcolor{red}{STUCK! I thought I was almost there: can show $\alpha_T\wedge (\psi\vveedot \psi)\not\models (\alpha_T\wedge\psi)\vveedot (\alpha_T\wedge\psi)$. Maybe just rephrase this ``iff" in some way...}\qed
\end{proof}
}

\commy{Construction site: This is not meant to be included in the paper}

Next, we present our main impossibility theorem:



\begin{thm}\label{impossibility_convex} 
For any convex language $L$ that properly extends the (convex) classical language\commy{OK, now I'll leave it without a definition - no space to define it: since we said already $L$ is convex, connectives in its classical part have to preserve convexity then} with $\vveedot$ and $\blacklozenge$, there is no binary operator $>$ such that the language $L[>]$ is convex, and satisfies both Feeble Modus Ponens and the Deduction Theorem for $>$\todelete{, or both Modus Ponens and the Feeble Deduction Theorem for $>$}.\commy{does this work? Of course, define ``(convex) classical language'': we basically kind of mentioned this at the end of Section 1. There are two choices: $[\bot,\wedge,\to]$ or $[\neg,\wedge,\lorbar]$}
\end{thm}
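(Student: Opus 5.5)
The plan is to adapt the Hardegree-style argument of Theorem \ref{thm: Hardegree impossibility}. Three substitutions are needed: the convexity characterisation of Lemma \ref{convexity_char_idem} takes the place of the union-closure idempotence of Lemma \ref{prop: inferences from closure properties}; the outer global disjunction $\vveedot$ takes the place of $\vee$; and the strengthened distributivity failure of Example \ref{non-distr-conv} supplies the contradiction. Since Modus Ponens is only assumed in its Feeble form, the formula $\varphi$ in the failing distributivity instance must be flat. That is why I use the counterexample with $\varphi$ classical: $\varphi=p$, $\psi=q$, $\chi=\blacklozenge\neg p$. All three are formulas of $L$, since $L$ extends the classical language with $\vveedot$ and $\blacklozenge$. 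Write $\theta:=(\varphi\land\psi)\vveedot(\varphi\land\chi)$. By Example \ref{non-distr-conv} we have $\varphi\land(\psi\vveedot\chi),\blacklozenge\psi\not\models\theta$.

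Suppose for contradiction that such a $>$ exists. I proceed in four steps.

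First, I note that $\varphi,\psi\models\theta$ and $\varphi,\chi\models\theta$. If $T\models\varphi\land\psi$, take $S=T\supseteq T$ as the witness for the first disjunct of $\theta$, and similarly for $\chi$. By the (full) Deduction Theorem this gives $\psi\models\varphi>\theta$ and $\chi\models\varphi>\theta$. This is the step that needs the full rather than Feeble Deduction Theorem, because $\chi=\blacklozenge\neg p$ is not flat.

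Second, I record two monotonicity facts, both immediate from the semantic clauses.
\begin{itemize}
\item If $\psi\models\rho$ and $\chi\models\rho$, then $\psi\vveedot\chi\models\rho\vveedot\rho$. The witnessing superteam $S$ satisfying $\psi$ or $\chi$ also satisfies $\rho$.
\item If $\psi\models\rho$, then $\blacklozenge\psi\models\blacklozenge\rho$. The witnessing nonempty subteam satisfying $\psi$ also satisfies $\rho$.
\end{itemize}
With $\rho:=\varphi>\theta$, these yield $\psi\vveedot\chi\models\rho\vveedot\rho$ and $\blacklozenge\psi\models\blacklozenge\rho$.

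Third, since $L[>]$ is convex, $\rho$ is convex. The left-to-right direction of Lemma \ref{convexity_char_idem} needs no empty-team hypothesis, and it gives $\blacklozenge\rho,\rho\vveedot\rho\models\rho$. Combining this with the second step, I get $\psi\vveedot\chi,\blacklozenge\psi\models\varphi>\theta$.

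Fourth, since $\varphi=p$ is flat, Feeble Modus Ponens gives $\varphi,\varphi>\theta\models\theta$. Hence $\varphi\land(\psi\vveedot\chi),\blacklozenge\psi\models\theta$, which contradicts Example \ref{non-distr-conv}.

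I expect the main obstacle to be checking the Example itself. One must verify that the team $T=\{u,v\}$ does not satisfy $\theta$. Any superteam satisfying $p\land q$ must contain $v$, which has $q$ false, so no such superteam exists. Any superteam satisfying $p\land\blacklozenge\neg p$ would need a valuation with $p$ false while all its valuations satisfy $p$, which is impossible. At the same time, $T$ does satisfy $p$, satisfies $q\vveedot\blacklozenge\neg p$ (via a superteam that adds a $\neg p$ valuation), and satisfies $\blacklozenge q$ (via $\{u\}$).

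The one remaining subtlety is making sure that the appeals to the Deduction Theorem and to convexity concern formulas of $L[>]$. They do, since $\rho\in L[>]$ and $\psi,\chi,\varphi\in L$.
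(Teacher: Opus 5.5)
Your proposal is correct and follows the paper's proof essentially step for step. You use the same flat-$\varphi$ instance from Example \ref{non-distr-conv}, apply the full Deduction Theorem to $\psi$ and $\chi$, use monotonicity of $\vveedot$ and $\blacklozenge$, apply the left-to-right direction of Lemma \ref{convexity_char_idem} to the convex formula $\varphi>\theta$, and close with Feeble Modus Ponens; your check of the Example and your remark that the non-flat $\chi=\blacklozenge\neg p$ is what forces the full Deduction Theorem are accurate details the paper leaves implicit.
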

\begin{proof}
\todelete{We first prove the result for Feeble Modus Ponens and the Deduction Theorem. }Suppose that such a conditional $>$ exists. 
Let $\varphi,\psi,\chi$ be $L$-formulas such that $\varphi, \psi\vveedot\chi,\blacklozenge\psi 
       \not\models (\varphi \land \psi) \vveedot (\varphi \land \chi)$ and $\varphi$ is flat (such as those given in Example \ref{non-distr-conv}(1), \corry{where the flat formula $\neg p$ can be expressed in $L$ with convexity-preserving connectives as it extends classical language}). Clearly,
\begin{equation*}\label{impossibility_convex_eq0}
\varphi, \psi \models (\varphi \land \psi) \vveedot (\varphi \land \chi)~\text{ and }~\varphi, \chi \models (\varphi \land \psi) \vveedot (\varphi \land \chi),
    \end{equation*}
which imply, by the Deduction Theorem, that
\begin{equation*}\label{impossibility_convex_eq1}
    \psi \models \varphi > (\varphi \land \psi) \vveedot (\varphi \land \chi)~\text{ and }~\chi \models \varphi > (\varphi \land \psi) \vveedot (\varphi \land \chi).
    \end{equation*}
    By the monotonicity of $\blacklozenge$ and $\vveedot$, it follows that 
    \begin{equation*}\label{impossibility_convex_eq2}
    \blacklozenge\psi  \models \blacklozenge\big(\varphi > (\varphi \land \psi) \vveedot (\varphi \land \chi)\big)
       \text{ and }
\psi \vveedot \chi \models (\varphi > (\varphi \land \psi) \vveedot (\varphi \land \chi)) \,\vveedot\, (\varphi > (\varphi \land \psi) \vveedot (\varphi \land \chi)).
    \end{equation*}
Since $>$ preserves convexity, the formula $\varphi > (\varphi \land \psi) \vveedot (\varphi \land \chi)$ is convex. Thus, by Lemma \ref{convexity_char_idem} 
we obtain that
$\psi\vveedot\chi,\blacklozenge\psi 
     \models  \varphi > (\varphi \land \psi) \vveedot (\varphi \land \chi)$.
By \corrf{Feeble} Modus Ponens, $\varphi, (\psi\vveedot\chi),\blacklozenge\psi 
     \models   (\varphi \land \psi) \vveedot (\varphi \land \chi)$, which is a contradiction.    
\todelete{
The argument for the case with respect to Modus Ponens and the Feeble Deduction Theorem is the same, except that we now use the formulas given in Example 
(2) with $\psi,\chi$ being flat, and in this case only the Feeble Deduction Theorem is needed.}
\qed\end{proof}

\vspace{0.5\baselineskip}

Next, we treat intersection closed logics.  
%
Unlike the other relatively well-understood closure properties  and their corresponding logics,
 logics with intersection closure property have remained largely unexplored in the literature. To the best of our knowledge, the topic appears to have been addressed only in very recent unpublished work by H\"{a}ggblom   \cite{Haggblom2026b} and by the first author  \cite{Barbero2026}. We therefore proceed in this section without assuming a (complete) syntax.
Our argument, however, makes use of the \emph{tensor conjunction} $\tand$ from \cite{HelLuoVaa2024}, defined as
\begin{itemize}
    \item $T\models \psi \tand \chi$ ~~iff~~ there are $R,S \supseteq T$ s. t. $T = R\cap S$, $R\models \psi$ and $S\models \chi$. 
\end{itemize}
It is easy to verify that the tensor conjunction $\tand$, as well as the usual conjunction $\wedge$, preserve intersection closure.

Intersection closure is characterised by the idempotence of $\tand$: 

\begin{lem}\label{lemma: characterization of intersection closure}
    $\varphi$ is intersection closed iff $\varphi \tand \varphi \models \varphi$.
\end{lem}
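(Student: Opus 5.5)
The plan is to mirror the proof of Lemma \ref{prop: inferences from closure properties}, unfolding the semantics of $\tand$ in both directions.

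For the left to right direction, assume $\varphi$ is intersection closed and $T\models\varphi\tand\varphi$. By the semantic clause for $\tand$ there are teams $R,S\supseteq T$ with $T=R\cap S$, $R\models\varphi$ and $S\models\varphi$. Intersection closure of $\varphi$ then gives $R\cap S\models\varphi$, that is, $T\models\varphi$. This establishes $\varphi\tand\varphi\models\varphi$.

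For the right to left direction, assume $\varphi\tand\varphi\models\varphi$, and let $R\models\varphi$ and $S\models\varphi$. Put $T:=R\cap S$. Then $R\supseteq T$, $S\supseteq T$ and $T=R\cap S$, so $R$ and $S$ witness $T\models\varphi\tand\varphi$. The assumption therefore yields $T\models\varphi$, i.e.\ $R\cap S\models\varphi$. Hence $\varphi$ is intersection closed.

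Neither direction presents a real obstacle. The only point to check is that the side condition $R,S\supseteq T$ in the clause for $\tand$ is automatically satisfied when $T$ is taken to be the intersection $R\cap S$ itself. This is what makes the witnessing pair $(R,S)$ available in the converse direction.
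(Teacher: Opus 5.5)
Your proof is correct and follows essentially the same route as the paper's: both directions just unfold the semantic clause for $\tand$, with the converse taking $T:=R\cap S$ so that $R,S$ witness $T\models\varphi\tand\varphi$. Your remark that the side condition $R,S\supseteq T$ holds automatically is a fair observation. The paper leaves that point implicit.
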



\begin{proof}
  Suppose $\varphi$ is intersection closed, and $T\models \varphi \tand \varphi$. Then, there are $R,S\models \varphi$ with $T = R\cap S$. Thus, $T\models \varphi$ by intersection closure.

Conversely, suppose $\varphi\tand \varphi \models \varphi$. If $R,S\models \varphi$, then $R \cap S \models \varphi \tand \varphi\models\varphi$. 
\qed\end{proof}

\begin{thm}\label{thm: intersection impossibility}
\corrf{Let $S$ and $T$ be such that $S\not\subseteq T$ and $T\not \subseteq S$.} If 
 $L$ is a language that can express $\{S\},\{T\}$ and $\{S\cap T\}$, then 
there is no binary operator $>$ such that  $L[>]$ is intersection closed and satisfies both Modus Ponens and the Deduction Theorem for $>$.
\end{thm}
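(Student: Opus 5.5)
We follow the Hardegree-style argument of Theorem \ref{thm: Hardegree impossibility}, with $\tand$ replacing $\vee$ and Lemma \ref{lemma: characterization of intersection closure} replacing Lemma \ref{prop: inferences from closure properties}. Let $\sigma_S,\sigma_T,\sigma_{S\cap T}$ be $L$-formulas with $\llbracket\sigma_S\rrbracket=\{S\}$, $\llbracket\sigma_T\rrbracket=\{T\}$ and $\llbracket\sigma_{S\cap T}\rrbracket=\{S\cap T\}$. Suppose such an operator $>$ exists. We take $\varphi:=\sigma_{S\cap T}$, $\psi:=\sigma_S$, $\chi:=\sigma_T$, and first check that distributivity of $\wedge$ over $\tand$ fails for them:
\[\varphi\wedge(\psi\tand\chi)\not\models(\varphi\wedge\psi)\tand(\varphi\wedge\chi).\]
Indeed, $S\cap T\models\sigma_S\tand\sigma_T$, since $S,T\supseteq S\cap T$ and $S\cap T=S\cap T$. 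Hence $S\cap T$ satisfies the antecedent. For the consequent, we would need $R,R'\supseteq S\cap T$ with $R\models\sigma_{S\cap T}\wedge\sigma_S$. That forces $R=S\cap T=S$, i.e.\ $S\subseteq T$, contradicting the hypothesis. (Indeed $\varphi\wedge\psi$ is unsatisfiable, since $S\neq S\cap T$.) This is the one place where $S\not\subseteq T$ is used; symmetrically, $T\not\subseteq S$ makes $\varphi\wedge\chi$ unsatisfiable too.

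Next we run the conditional argument. Let $\theta:=(\varphi\wedge\psi)\tand(\varphi\wedge\chi)$. Since $\varphi\wedge\psi$ has no models, $\varphi,\psi\models\theta$ holds vacuously, and likewise $\varphi,\chi\models\theta$. So the unsatisfiability replaces the empty-team step used in the union closed case. By the Deduction Theorem, $\psi\models\varphi>\theta$ and $\chi\models\varphi>\theta$. Monotonicity of $\tand$ (immediate from its definition) gives $\psi\tand\chi\models(\varphi>\theta)\tand(\varphi>\theta)$. Since $L[>]$ is intersection closed, $\varphi>\theta$ is intersection closed, so Lemma \ref{lemma: characterization of intersection closure} yields $\psi\tand\chi\models\varphi>\theta$. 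Modus Ponens then gives $\varphi\wedge(\psi\tand\chi)\models\theta$, contradicting the failure of distributivity established above.

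The main obstacle is choosing the three formulas so that the counterexample to distributivity uses only the propositions $\{S\}$, $\{T\}$ and $\{S\cap T\}$. The natural first choice is the one above, and the only subtle check is that $S\cap T$ really satisfies $\sigma_S\tand\sigma_T$. The hypotheses $S\not\subseteq T$ and $T\not\subseteq S$ are then used only to make $\varphi\wedge\psi$ and $\varphi\wedge\chi$ unsatisfiable, which yields both the failure of distributivity and the two vacuous entailments. Note also that the argument needs the full Deduction Theorem, not just its Feeble version: $\sigma_S$ and $\sigma_T$ are not flat, because they are not downward closed when $S\neq\emptyset$, which holds here since $S\not\subseteq T$.
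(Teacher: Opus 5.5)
Your proof is correct and is essentially the paper's own argument. It uses the same three formulas, the vacuous entailments from the unsatisfiability of $\sigma_{S\cap T}\wedge\sigma_S$ and $\sigma_{S\cap T}\wedge\sigma_T$, the Deduction Theorem, the $\tand$-idempotence characterization of intersection closure, and Modus Ponens, and it ends in the same failure of distributivity of $\wedge$ over $\tand$ at the team $S\cap T$.

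One small point, shared with the paper: you treat $(\varphi\wedge\psi)\tand(\varphi\wedge\chi)$ as a formula of $L[>]$ so that the Deduction Theorem applies, which the hypotheses do not strictly provide. This is harmless, since $\sigma_S$ itself can serve as the consequent, and intersection closure of $\sigma_{S\cap T}>\sigma_S$ can then be applied directly to the teams $S$ and $T$.
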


\begin{proof}
Suppose such a conditional $>$ exists.  Let $S,T$ be two distinct teams such that $S\cap T \neq S,T$.  We then have
\[\theta_{S\cap T},\theta_S \models (\theta_{S\cap T} \land \theta_S) \tand (\theta_{S\cap T} \land \theta_T)~\text{ and }~\theta_{S\cap T},\theta_T \models (\theta_{S\cap T} \land \theta_S) \tand (\theta_{S\cap T} \land \theta_T),\]
as no team satisfies any of the two antecedents above. By Deduction Theorem, 
\[\theta_S \models \theta_{S\cap T}>(\theta_{S\cap T} \land \theta_S) \tand (\theta_{S\cap T} \land \theta_T)~\text{ and }~\theta_{T}\models \theta_{S\cap T} >(\theta_{S\cap T} \land \theta_S) \tand (\theta_{S\cap T} \land \theta_T).\]
Put $\varphi=\theta_{S\cap T}>(\theta_{S\cap T} \land \theta_S) \tand (\theta_{S\cap T} \land \theta_T)$, which is a formula closed under intersection, as $>$ preserves intersection closure. By Lemma \ref{lemma: characterization of intersection closure}, we have that
\(\theta_S\tand \theta_T\models \varphi\tand \varphi\models\varphi.\)
Then, by Modus Ponens, we arrive at
\(\theta_{S\cap T} \land (\theta_S \tand \theta_T) \models (\theta_{S\cap T} \land \theta_S) \tand (\theta_{S\cap T} \land \theta_T),\)
which is impossible, as $S\cap T$ satisfies the antecedent but not the consequent.
\qed\end{proof}


In contrast, languages that are both intersection  and upward closed admit a well-behaved conditional, namely $\upto$; see 
Appendix \ref{app-inter-up} for a proof.

Let us remark that, as in the union closed case, the proofs of Theorems \ref{impossibility_convex} and \ref{thm: intersection impossibility} rely only on the existence in $L$ of three formulas: either $\varphi,\psi,\chi$ witnessing the failure of the strengthened distributivity, as in the proof of the former theorem, or $\theta_S,\theta_T,\theta_{S\cap T}$ as in the latter.  The presence of the the operators $\blacklozenge$, $\vveedot$ or $\tand$ in the language $L$ is not substantial for the proofs either: For instance, the proof of Theorem \ref{impossibility_convex} requires only that $L$ 
contains formulas $\theta_\varphi,\theta_\psi,\theta_\chi$ in $L$  equivalent to the formulas $\varphi,\psi,\chi$ in Example \ref{non-distr-conv}, respectively, together with  formulas $\theta_{\psi\vveedots\chi}$, $\theta_{\blacklozenge\psi}$, $\theta_{(\varphi \land \psi) \vveedots (\varphi \land \chi)}$  equivalent to $\psi\vveedot\chi$, $\blacklozenge\psi$, and $(\varphi \land \psi) \vveedot (\varphi \land \chi)$, respectively.

\section{Weak Conditionals}\label{sec:weak-cond}


Having shown that certain classes of languages (union closed, convex, intersection closed) do not admit conditionals that are well-behaved with respect to stringent or even feeble requirements, a natural question is whether well-behaved conditionals can still exist in the remaining cases, or when we weaken the constraints further.
%
%
%

It has been claimed, e.g., by van Fraassen (\cite{Fra1973}) that the essential features that warrant a binary connective $>$ being considered  a conditional are Modus Ponens and \emph{Introduction Rule}, a special case of the Deduction Theorem:
\begin{center}
    If $\varphi\models \psi$, then $\models \varphi > \psi$.
\end{center}
In this regard, consider the following \emph{maximal implication} $\hookrightarrow$ (introduced in \cite{KonNur2009}) and its dual {\em minimal implication} $\mathbin{\text{\rotatebox[origin=c]{180}{$\hookleftarrow$}}}$: 
\begin{itemize}
\item  $T\models \varphi \hookrightarrow \psi$ ~~iff~~ for all \emph{maximal} $S\subseteq T$ such that $S\models\varphi$, we also have $S\models\psi$  
\item $T\models \varphi \mathbin{\text{\rotatebox[origin=c]{180}{$\hookleftarrow$}}} \psi$ ~~iff~~ for all \emph{minimal} $S\supseteq T$ such that $S\models\varphi$, we also have $S\models\psi$. 
\end{itemize}
The former 
is  well-behaved  in van Fraassen's sense for intersection closed languages, \corrf{and even satisfies the Feeble Deduction theorem; it then witnesses that we cannot weaken the Deduction Theorem assumption in the impossibility result for intersection closed logics (whether the Modus Ponens aasumption can be relaxed remains an open problem).} The latter 
is a van Fraassen conditional for union closed languages. 
%
Let us also remark that the \emph{linear implication} $\multimap$ from \cite{AbrVaa2009}, and the conditional $\rcond$ from \emph{relevance logic} (\cite{AndDunBel2017}) satisfy Modus Ponens and preserve union  and downward closure:
\begin{itemize}
    \item $T\models \varphi \multimap \psi$ ~~iff~~ for all  $S$ such that $S\models\varphi$, we also have $S\cup T\models\psi$ 

    \item $T\models \varphi \rcond \psi$ ~~iff~~ for all $S$ such that $S\models \varphi$, we have $T\cap S\models \psi$ 
\end{itemize}
 However,  the introduction rule fails for $\multimap$ in the worst possible way: $p\multimap p$ is not valid; similarly,  $\blacklozenge p \rcond \blacklozenge p$ is not valid either.


Alternatively, one may want to  retain the Deduction Theorem fully. The following \emph{epistemic indicative} $\wcond$ preserves union closure and satisfies the Deduction Theorem, as well as  Modus Ponens \emph{for downward closed antecedents}, but it lacks transitivity:
\begin{itemize}
\item $T\models \varphi \wcond \psi$ ~~iff~~  whenever  $S\models \varphi \text{ for all } S\subseteq T, \text{ we have } T\models \psi$
\end{itemize}
The existence of such a conditional shows \emph{a fortiori} that requiring Feeble Modus Ponens and the Deduction Theorem does not yield an impossibility in the union closed case, differently from what we have seen in the convex case.

Another approach yet is to consider conditionals that satisfy both Modus Ponens and the Deduction Theorem  partially. We propose two such conditionals here, both originated from an epistemic interpretation of teams (as descriptions of  uncertainty among various possibilities): 
\begin{itemize}
\item $T \models \varphi \ecf \psi$ ~~iff~~  whenever  $S\subseteq T$  and  $U\models\varphi$  for all  $U\subseteq S$,  we have  $S\models \psi$.
\item $T\models \varphi \bcond \psi$ ~~iff~~  whenever $S\models \varphi$ for all $S\subseteq T$, we have $S\models \psi$  for all $S\subseteq T$.
\end{itemize}
The former,
 which we shall call \emph{epistemic counterfactual}, expresses the fact that ``If we \emph{knew} that $\varphi$, then we \emph{would know} $\psi$'', while the latter,  
 called the \emph{epistemic conditional} expresses ``If we \emph{know} that $\varphi$, then we \emph{know} that $\psi$''. Both conditionals satisfy Modus Ponens \emph{for downward closed antecedents}, and the Deduction Theorem \emph{for downward closed contexts}. As for preservation properties, $\bcond$ preserves union and upward closure. The conditional $\ecf$ preserves union, upward, and intersection closure, and convexity, and thus it testifies that the requirement of Feeble Modus Ponus plus the Feeble Deduction Theorem does not lead to an impossibility in the case of \corrf{(union closed, intersection closed or) convex} languages. Interestingly, both $\ecf$ and the relevant conditional $\rcond$ coincide with the intuitionistic implication $\to$ over downward closed languages.

Tables 1 and 2 
in Section \ref{app-weak-cond} of the Appendix list some other typical desirable properties of conditionals that the above-mentioned ones satisfy or lack, such as (strong) transitivity,  antecedent strengthening, importation, and exportation.

\section{Concluding Remarks}

In this paper, we have studied the possibility of having well-behaved conditionals in team logics with various closure properties, requiring that such conditionals preserve the closure property of the logic, and satisfy both Modus Ponens and the Deduction Theorem, or their weaker variants. We proved that the intuitionistic implication $\to$ is the unique well-behaved conditional for downward closed logics, and similarly for the dual of intuitionistic implication $\upto$ for upward closed logics. We obtained analogous characterizations for material implication $\rightarrowtriangle$ and its weaker variant $\wmimp$ as well, from which we concluded that well-behaved conditionals do not exist for sufficiently expressive union closed, convex or intersection logics. We also showed that well-behaved conditionals satisfying weaker requirements typically fail to exist for these logics either.


Having ascertained that conditionals satisfying even weaker sets of requirements exist, a natural follow-up question is whether, given a natural list of requirements,  such operators can be classified completely. A further question concerns how the addition of these ``weak conditionals'' affects the properties of a logic; e.g., what is the effect of adding a conditional that does not preserve flatness to the classical language? 


\commf{If we are really desperate for space I would cut out the paragraph with $\multimap$ and $\rcond$. In that case, remove the reference to $\rcond$ from the penultimate paragraph.}



\vspace{\baselineskip}

\ackname ~We thank Samson Abramsky, Aleksi Anttila, Matilda H\"{a}ggblom and S\o{}ren Knudstorp for helpful discussions, and an anonymous reviewer for very constructive suggestions. The first author's research was supported by the Research Council of Finland grant n. 349803.


\begin{center}
    \textbf{APPENDIX}
\end{center}

\appendix

\commf{It is not too clear to me if according to instructions the appendix should be before or after the references.}

\commy{I commented out many things here: still in source file}

\commy{I commented out a section on ``further properties of conditionals": incorporated these to the last section in appendix}

\section{Lack of Closure Properties for (Weak) Material Implication}\label{Appendix-material-imp}

\begin{lem}\label{lemma: material implication}
Neither $\rightarrowtriangle$ nor $\wmimp$ preserves union or intersection closure, or convexity.
\end{lem}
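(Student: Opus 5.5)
The plan is to give explicit counterexamples. In each case we pick two formulas $\varphi,\psi$ that have the relevant closure property and a few concrete teams on which $\varphi\rightarrowtriangle\psi$ and $\varphi\wmimp\psi$ violate that property. All teams used are nonempty, so the empty-team clause of $\wmimp$ never applies, and the same teams refute both conditionals at once.

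A preliminary observation fixes the shape of the examples. Consider a closure operation producing a team $U$ from teams $T_i$ that satisfy $\varphi\rightarrowtriangle\psi$, where $U$ must satisfy $\varphi$ and fail $\psi$. If $\varphi$ were also downward closed (and $U\subseteq T_i$), or $\psi$ were closed under the operation, then $U\models\psi$ would follow. So we choose $\psi=\bot$ throughout. It is flat, hence union closed, intersection closed and convex. For nonempty $T$ we then have $T\models\varphi\rightarrowtriangle\bot$ iff $T\models\varphi\wmimp\bot$ iff $T\not\models\varphi$. All the work goes into choosing $\varphi$ that holds on the combined team but fails on the given ones. We use the connective $\blacklozenge$ defined in Section \ref{sec: impossibility in convex logics}, noting that $\blacklozenge\alpha$ for flat $\alpha$ is upward closed.

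For union closure, let $Prop=\{p\}$, $u(p)=1$, $v(p)=0$, and $\varphi=\blacklozenge p\wedge\blacklozenge\neg p$. This is a conjunction of upward closed formulas, hence upward closed and in particular union closed. Then $\{u\}\not\models\varphi$ and $\{v\}\not\models\varphi$, so both teams satisfy $\varphi\rightarrowtriangle\bot$. However, $\{u,v\}\models\varphi$ while $\{u,v\}\not\models\bot$, so the union fails the conditional.

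For intersection closure, let $Prop=\{p,q\}$ and write valuations as pairs of values of $(p,q)$. Take $T_1=\{11,10\}$, $T_2=\{11,01\}$ and $\varphi=p\wedge q$, which is flat and so intersection closed. Neither $T_i$ satisfies $\varphi$, but $T_1\cap T_2=\{11\}$ is nonempty and satisfies $\varphi$, hence fails $\varphi\rightarrowtriangle\bot$.

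For convexity, take $\varphi=p\wedge\blacklozenge\neg q$, a conjunction of a flat and an upward closed formula, hence convex. Use $R=\{11\}\subseteq T=\{11,10\}\subseteq S=\{11,10,01,00\}$. Here $R$ fails $\blacklozenge\neg q$ and $S$ fails $p$, so $R$ and $S$ satisfy $\varphi\rightarrowtriangle\bot$. Yet $T\models\varphi$, so $T$ does not satisfy it.

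The only step needing real care is verifying the closure properties of the chosen antecedents; the team computations are immediate. Specifically, we need that $\blacklozenge\alpha$ is upward closed for flat $\alpha$, and that conjunctions preserve upward closure and convexity. If one insists on antecedents in a particular language, these formulas can be replaced by any formulas defining the same team propositions.
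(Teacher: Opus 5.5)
Your proof is correct. For nonempty teams, both $\varphi\rightarrowtriangle\bot$ and $\varphi\wmimp\bot$ amount to $T\not\models\varphi$. Your three antecedents have the claimed closure properties, and each team computation checks out.

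The method, explicit witnesses, is the paper's, and your union case is essentially its $\blacklozenge p\land q$ versus $r$ example. The difference lies in the other two cases. The paper uses a single classical pair $p\lor q$, $r$. With $T=\{u,v\}$ and $S=\{v,w\}$, the intersection $\{v\}$ refutes intersection closure. The chain $\emptyset\subseteq\{v\}\subseteq T$ then refutes convexity, using the fact that $\emptyset$ satisfies both conditionals. So the paper gets convexity failure already for classical antecedent and consequent, and from one configuration.

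Your restriction to nonempty teams is what forces the non-flat antecedent $p\wedge\blacklozenge\neg q$. Even keeping $\psi=\bot$, you could have taken $\varphi=p$ and the chain $\emptyset\subseteq\{u\}\subseteq\{u,v\}$ with $u(p)=1$, $v(p)=0$. In exchange, the uniform choice $\psi=\bot$ makes every case a one-line check. Your witnesses also still cover the combinations the paper records after its proof: convexity with union closure, via your upward closed antecedent, and convexity with intersection closure, via your flat pair $p\wedge q$, $\bot$.

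One blemish is that your motivating remark is garbled. $U\models\psi$ is forced only when two things hold together. First, $\varphi$ must transfer from $U$ back to the $T_i$: downward closure when $T_i\subseteq U$, upward closure when $U\subseteq T_i$. Second, and in addition rather than as an alternative, $\psi$ must be closed under the operation. As written, the remark would rule out your own intersection example and every example with $\psi=\bot$. Since your verification never relies on it, the proof is unaffected.
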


\begin{proof}
Consider the formulas $p\lor q$ and $r$, which have all the listed closure properties. Let $u,v,w$ be valuations such that $u(p,q,r)= (0,0,0), v(p,q,r)= (1,0,0)$ and $w(p,q,r)= (0,0,1)$. Clearly, for $T=\{u,v\}$ and $S=\{v,w\}$, we have $T$ and $S$ satisfy both $p\lor q\rightarrowtriangle r$ and $p\lor q\wmimp r$, while $T\cap S=\{v\}$ satisfies neither, showing that none of the implications preserve intersection closure. Moreover, we have $\emptyset \models p\lor q\rightarrowtriangle r$ and $\emptyset \models p\lor q\wmimp r$. Since $\emptyset\subseteq \{v\}\subseteq T$, preservation of convexity fails for the two implications as well.
 
For the failure of preservation of union closure, consider the formulas  $\blacklozenge p \land q$ and $r$, which are union closed (and convex). Let $u,v$ be valuations such that  $u(p,q,r) = (1,1,1)$ and $v(p,q,r)=(0,1,0)$. Clearly, both $\{u\}$ and $\{v\}$ satisfy $\blacklozenge p \land q \rightarrowtriangle r$ and $ \blacklozenge p \land q \wmimp r$, whereas their union $\{u,v\}$ satisfies neither implication.\qed
\end{proof}
The first part of the proof above also shows that $\rightarrowtriangle$ or $\wmimp$ does not preserve the combination of convexity and intersection closure; the second part of the proof also shows that none of them preserves the combination of convexity and union closure either.




\commy{commented out the appendix on ``Extending the impossibility results to arbitrary complete languages"}

\section{Languages that Are both Intersection Closed and Upward Closed}\label{app-inter-up}

At the end of section \ref{sec: impossibility in convex logics} we mentioned the fact that languages that are closed both upward and under intersections do admit a well-behaved conditional, namely $\upto$. This follows from the fact that $\upto$ preserves the \emph{conjunction}\commf{Note that in the previous section we say ``combination''. Which is better?} of intersection closure and upward closure (even though it does not preserve intersection closure alone).

\begin{prop} 
    If $\varphi$ is upward closed and $\psi$ is intersection closed, then $\varphi \upto \psi$ is upward and intersection closed.
In particular, $\upto$ preserves the property of being both upward and intersection closed.  
\end{prop}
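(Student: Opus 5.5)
The plan is to verify the two closure properties of $\varphi\upto\psi$ directly from the semantic clause for $\upto$, using the upward closure of $\varphi$ and the intersection closure of $\psi$ only where they are needed. The ``in particular'' clause then follows at once: if $\varphi$ and $\psi$ are both upward and intersection closed, then in particular $\varphi$ is upward closed and $\psi$ is intersection closed, so the first part applies.

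\emph{Upward closure} needs no assumption on $\varphi$ or $\psi$. Suppose $T\models\varphi\upto\psi$ and $T\subseteq T'$. Every $S\supseteq T'$ also satisfies $S\supseteq T$. So the universally quantified condition defining $\upto$ at $T$ already covers all supersets of $T'$, and hence $T'\models\varphi\upto\psi$.

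\emph{Intersection closure} is the main step. Suppose $T_1\models\varphi\upto\psi$ and $T_2\models\varphi\upto\psi$, and put $T=T_1\cap T_2$. Take any $S\supseteq T$ with $S\models\varphi$; we must show $S\models\psi$. The key idea is to move $S$ above each $T_i$ by taking unions, and then use the set identity
\[(S\cup T_1)\cap(S\cup T_2)=S\cup(T_1\cap T_2)=S.\]
The last equality holds because $T_1\cap T_2\subseteq S$. The argument then runs as follows.
\begin{enumerate}
\item Since $\varphi$ is upward closed and $S\subseteq S\cup T_i$, we get $S\cup T_i\models\varphi$ for $i=1,2$.
\item Since $S\cup T_i\supseteq T_i$ and $T_i\models\varphi\upto\psi$, the clause for $\upto$ gives $S\cup T_i\models\psi$ for $i=1,2$.
\item Since $\psi$ is intersection closed, $(S\cup T_1)\cap(S\cup T_2)\models\psi$, and by the identity above this team is exactly $S$.
\end{enumerate}
Hence $S\models\psi$, and so $T\models\varphi\upto\psi$.

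The only nonobvious point is the choice of the teams $S\cup T_i$: they must be supersets of $T_i$ so that the clause for $\upto$ applies, and their intersection must collapse back to $S$. Upward closure of $\varphi$ is used exactly to guarantee that these enlarged teams still satisfy the antecedent. Everything else is routine.
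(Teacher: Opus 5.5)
Your proof is correct and follows the same route as the paper's: upward closure is read directly off the clause for $\upto$, and intersection closure is obtained by enlarging $S$ to $S\cup T_i$ (using upward closure of $\varphi$), applying the clause at each $T_i$, and collapsing back via $(S\cup T_1)\cap(S\cup T_2)=S\cup(T_1\cap T_2)=S$ with intersection closure of $\psi$. You also spell out the ``in particular'' clause, which the paper leaves implicit.
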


\begin{proof}
It follows immediately from the semantics that the formula $\varphi \upto \psi$ is always upward closed. For intersection closure, suppose $T,S\models \varphi \upto \psi$.
   Let $R\supseteq T \cap S$ be such that $R\models\varphi$; we must show that $R\models \psi$. By upward closure of $\varphi$, $R\cup T\models \varphi$ and $R\cup S\models \varphi$. Thus, since $T\models \varphi \upto \psi$ (resp. $S\models \varphi \upto \psi$) we obtain $R\cup T\models \psi$ (resp. $R\cup S\models \psi$). But, since $R\supseteq T \cap S$, we have $(R\cup T)\cap(R\cup S) = R\cup (T\cap S) = R$; thus, by intersection closure of $\psi$, $R\models \psi$. \qed  
\end{proof}

\noindent 
We already know that $\upto$ satisfies Modus Ponens and the Deduction Theorem relative to upward closed languages, thus a fortiori in languages that are both  upward and intersection closed; thus, $\upto$ is well-behaved for this class of languages.

\section{Summary of the Weak Conditionals}\label{app-weak-cond}

Table 1 and Table 2 summarise the main properties of some of the most interesting weak conditionals identified in Section \ref{sec:weak-cond}, where the relevant propertiess are defined as:
\begin{description}
    \item[(Strong) transitivity:] $\varphi > \psi, \psi > \chi \models \varphi > \chi$.

    \item[Weak transitivity:] If $\models \varphi > \psi$ and $\models \psi > \chi$, then $\models \varphi > \chi$.
    \item[Intermediate transitivity:] If $\models \varphi>\psi$,   then $\psi > \chi \models \varphi>\chi$.
\item[Antecedent strengthening:] $\varphi >\psi \models (\varphi\land \chi) > \psi$.

\item[Importation:] $\varphi > (\psi > \chi) \models (\varphi \land \psi) > \chi$.

\item[Exportation:] $(\varphi \land \psi) > \chi \models \varphi > (\psi > \chi)$.

\item[Monotonicity:] If $\psi\models \psi'$, then $\varphi > \psi \models \varphi > \psi'$ and $\psi' > \varphi\models \psi > \varphi$.
\end{description}

\noindent These properties are often expected of conditionals, with an  exception being the so-called \emph{counterfactual conditionals} (\cite{Lew1973,StaKoc2025}). 



We also include a conditional $\Rightarrow$, which is  an object-language version of semantic entailment ($T\models \psi \Rightarrow \chi$ iff for all $S$, $S\models\psi$ implies $S\models\chi$), and a somewhat pathological example of a van Fraassen conditional for union closed languages. 

 In the tables, ``ucl'' abrreviates ``union closed'', ``dwcl'' abbreviates ``downward closed'' and ``upcl'' abbreviates upwards closed. A ``yes'' indicates that all conditionals formed using the operator $>$ indicated in the row have the property given by the column; ``preserved'' means that the operator preserves the property, i.e. $\psi > \chi$ has the property if $\psi$ and $\chi$ have it; ``no'' means that $>$ does not even preserve the closure property.  The marker ``preserved (r)'' means that $\psi > \chi$ has the given closure property if the  consequent $\chi$  has it; the marker ``preserved (l)'' concerns the antecedent rather than the consequent. The row ``generalizes $\rightarrow$'' in Table 1 states whether $>$ coincides with the intuitionistic implication $\rightarrow$ when applied to downward closed antecedents.

 For reasons of space, the simple 
 proofs of the facts stated in the tables are omitted.
 \commf{If any is included, I would opt for the fact that $\maxcond$ preserves intersection closure (or its dual) -- the proof is quite nontrivial. Also the proof that $\ecf$ preserves union closure I find somewhat interesting.} We remark that some of the properties that we state to fail in general may hold in special cases: e.g., for $\maxcond$, strong transitivity fails in general but holds in downward closed languages.



\begin{sidewaystable}[h]
\centering
\setlength{\tabcolsep}{3pt} 
\caption{Inferential properties}
\begin{tabular}{|l|
c|c|c|c|c|c|c|}
\hline
\textbf{Conditional} 
& $\Rightarrow$ & $\ecf$ & $\wcond$ & $\bcond$ & $\maxcond$ & $\mincond$ & $\rcond$ \\
\hline

Description 
& \shortstack{entailment} 
& \shortstack{epistemic\\counterfactual} 
& \shortstack{epistemic\\indicative} 
& \shortstack{epistemic\\conditional} 
& \shortstack{maximal\\implication} 
& \shortstack{minimal\\implication} 
& \shortstack{relevant} \\
\hline

Modus Ponens 
& yes 
& \shortstack{yes for\\dwcl antecedent} 
& \shortstack{yes for\\dwcl antecedent} 
& \shortstack{yes for\\dwcl antecedent} 
& yes 
& yes 
& yes \\
\hline

Deduction Theorem 
& introduction rule 
& \shortstack{yes for\\dwcl context} 
& yes 
& \shortstack{yes for\\dwcl context} 
& \shortstack{yes for\\dwcl context} 
& \shortstack{yes for\\upcl context} 
& \shortstack{dwcl antecedent\\and context} \\
\hline

Transitivity 
& strong & strong & intermediate & strong & weak & weak & strong \\
\hline

Antecedent strengthening 
& yes & yes & yes & yes & \shortstack{for dwcl\\consequent} 
& \shortstack{for upcl\\consequent} & yes \\
\hline

Import 
& yes & yes & yes & yes & \shortstack{for dwcl\\consequent} 
& \shortstack{for upcl\\consequent} & yes \\
\hline

Export 
& no & yes & yes & yes & no & no & no \\
\hline

Monotonicity 
& yes & yes & yes & yes & \shortstack{if dwcl\\consequent} 
& \shortstack{if upcl\\consequent} & yes \\
\hline

Generalizes $\rightarrow$ 
& no & yes & no & no & \shortstack{yes if\\antec.\ local} & no & yes \\
\hline

\end{tabular}

\caption{Closure properties}
\begin{tabular}{|c|c|c|c|c|c|c|c|}
\hline
\textbf{Conditional} 
& $\Rightarrow$ 
& $\ecf$ 
& $\wcond$ 
& $\bcond$ 
& $\maxcond$ 
& $\mincond$ 
& $\rcond$ \\
\hline

\makecell{Empty team closure}
& no 
& preserved 
& preserved (r) 
& preserved (r) 
& preserved (r) 
& no 
& preserved (r) \\
\hline

Flatness
& yes except $\emptyset$ 
& \makecell{yes for ucl + $\emptyset$} 
& no 
& no 
& preserved (r) 
& no 
& preserved (l) \\
\hline

\makecell{Downward closure}
& yes 
& yes 
& no 
& no 
& preserved (r) 
& no 
& preserved (l) \\
\hline

\makecell{Union closure}
& yes 
& preserved (r) 
& preserved (r) 
& preserved (r) 
& no 
& preserved 
& yes \\
\hline

Convexity
& yes 
& yes 
& no 
& no 
& no 
& no 
& yes \\
\hline

\makecell{Upward closure}
& yes 
& no 
& preserved (r) 
& preserved (r) 
& no 
& preserved (r) 
& preserved (r) \\
\hline

\makecell{Intersection closure}
& yes 
& yes 
& no 
& no 
& preserved 
& no 
& preserved (r) \\
\hline

\end{tabular}
\end{sidewaystable}

\commf{I don't understand why, the file wants to put this table AFTER the references}

\bibliographystyle{plain}

\end{document}